\newtheorem{theorem}{Theorem}[section]
\newtheorem{lemma}[theorem]{Lemma}
\newtheorem{proposition}[theorem]{Proposition}
\newtheorem*{thmnowhere}{Theorem~\ref{thm:nowhere}} 
\newtheorem*{thmmain}{Theorem~\ref{thm:main}} 
\theoremstyle{definition}
\newtheorem{definition}[theorem]{Definition}
\newtheorem{example}[theorem]{Example}
\newtheorem{question}[theorem]{Question}
\newtheorem{conjecture}[theorem]{Conjecture}
\newtheorem*{formulationA}{Formulation A}
\newtheorem*{formulationB}{Formulation B}
\theoremstyle{remark}
\newtheorem{remark}[theorem]{Remark}
\numberwithin{equation}{section}
\newcommand{\demph}[1]{\emph{#1}}
\newcommand{\Met}{\operatorname{Met}}
\newcommand{\FMet}{\operatorname{FMet}}
\newcommand{\FMetmag}{\operatorname{FMet}^\circ}
\newcommand{\sumofentry}{\operatorname{\mathbf{sum}}}
\newcommand{\adj}{\operatorname{adj}}
\newcommand{\diam}{\operatorname{diam}}
\newcommand{\surj}{\twoheadrightarrow}
\newcommand{\lines}[2]{\mathrm{Lines}_{#1}({#2})}
\newcommand{\runij}{\diamondsuit}
\newcommand{\runab}{\bm{\ast}}
\renewcommand{\th}{\text{th}}
\title{Is magnitude `generically continuous' for finite metric spaces?}
\author{Hirokazu Katsumasa}
\address{Department of Mathematics, Osaka University, Japan}
\email{u895471k@ecs.osaka-u.ac.jp}
\author{Emily Roff}
\address{School of Mathematics, University of Edinburgh, Scotland}
\email{emily.roff@ed.ac.uk}
\author{Masahiko Yoshinaga}
\address{Department of Mathematics, Osaka University, Japan}
\email{yoshinaga@math.sci.osaka-u.ac.jp}
\begin{document}

\begin{abstract}
Magnitude is a real-valued invariant of metric spaces which, in the finite setting, can be understood as recording the `effective number of points' in a space as the scale of the metric varies. Motivated by applications in topological data analysis, this paper investigates the \emph{stability} of magnitude: its continuity properties with respect to the Gromov--Hausdorff topology. We show that magnitude is nowhere continuous on the Gromov--Hausdorff space of finite metric spaces. Yet, we find evidence to suggest that it may be `generically continuous', in the sense that generic Gromov--Hausdorff limits are preserved by magnitude. We make the case that, in fact, `generic stability' is what matters for applicability.
\end{abstract}

\maketitle

\setcounter{tocdepth}{1}


\section{Introduction}

Magnitude is a real-valued isometric invariant of metric spaces, with wide-ranging connections to topics in geometry, analysis, combinatorics and category theory. It was first introduced by Leinster in \cite{LeinsterMagnitude2013}, where it was defined for finite metric spaces as follows. Given a space \(X\), let \(Z_X\) denote the \(X \times X\) matrix with entries $Z_X(x,y)=e^{-d(x, y)}$; this is called the \demph{similarity matrix} of \(X\). A \demph{weighting} for \(X\) is a vector \(\bm{w} \in \mathbb{R}^X\) with the property that \((Z_X \bm{w})(x) = 1\) for every \(x \in X\). If a weighting \(\bm{w}\) exists, then the \demph{magnitude} \(|X|\) is defined to be the sum of the entries in \(\bm{w}\). (In the case that \(X\) carries more than one weighting, this value is independent of the choice of weighting used to compute it \cite[Lem.~1.1.2]{LeinsterMagnitude2013}.)

For almost every finite metric space, a weighting does exist---indeed, in the generic case \(\det(Z_X) \neq 0\), so there is a unique weighting on \(X\) given by \(\bm{w}(x) = \sum_{y \in X} Z_X^{-1}(x,y)\). Thus, for a generic finite metric space \(X\), we have
\begin{equation}
\label{eq:mag_def}
    |X| = \sum_{x,y \in X} Z_{X}^{-1}(x,y).
\end{equation}

The magnitude of a metric space is analogous, in a precise sense, to the Euler characteristic of a topological space \cite{LeinsterMagnitude2008}. Unlike Euler characteristic, however, the magnitude of a space depends on the scale of the metric. Thus, given a metric space \(X = (X,d)\), it is informative to consider the magnitudes of the scaled spaces \(tX := (X, t d)\) for each real number \(t > 0\). The \demph{magnitude function} of \(X\) is the partially defined function 
\begin{align}
\label{eq:mag_R}
\begin{split}
    (0, \infty) &\longrightarrow \mathbb{R} \\
    t &\longmapsto |tX|.
\end{split}
\end{align}
Under favourable conditions---for instance, if \(X\) is a subset of Euclidean space---this function is in fact continuous, and satisfies \(\lim_{t \to 0} |tX| = 1\) and \(\lim_{t \to \infty} |tX| = \# X\). In such cases, the magnitude function can be interpreted as recording the `effective number of points' in \(X\) as the scale of the metric varies, and its instantaneous rate of growth as recording the `effective dimension'. For illustrative examples, see \cite[Ex.~6.4.6]{LeinsterEntropy2021} and \cite[\S 4]{WillertonSpread2015}.

The definition of magnitude can be extended from finite metric spaces to a large class of compact metric spaces \cite[\S 2]{MeckesPositive2013}, and in that setting it turns out to be remarkably rich in geometric content. The large-scale asymptotics of the magnitude function are of particular interest when interpreted for domains in Euclidean space \cite{BarceloCarbery2018, GimperleinGoffeng, Willerton2020Odd, Meckes2020} or manifolds with boundary \cite{GimperleinGoffengLouca2022, GimperleinGoffengLouca2024}, capturing information about `size-related' attributes such as volume, surface area, and integrals of curvatures. There is also an associated homology theory, \demph{magnitude homology}, which has been related in valuable ways to graph-theoretic invariants arising in the context of discrete homotopy theory \cite{HepworthWillerton,Asao, HepworthRoff2024,kaneta2021magnitude}.

Though its geometric features are displayed most vividly in the world of compact metric spaces, there has been a recent resurgence of interest in the magnitude of finite spaces, both from the perspective of `pure' metric geometry \cite{OHara2401, OHara2408, Gomi2023} and with a view to applications. Magnitude is increasingly being put to work as an invariant of data sets, in the context of topological data analysis (e.g.~\cite{ABOR2019, OKO2023}) and in machine learning (e.g.~\cite{ALRS2023, Huntsman2023, LASR2023A}), with new techniques being developed to facilitate its efficient computation or approximation \cite{AWSGS}.

Applications such as these bring new theoretical questions into the foreground. In particular, for applicability in data analysis, the \emph{stability} of geometric or topological invariants is important: small perturbations of the input data should lead to small changes, at most, in the value of an invariant \cite[Chap.~VIII]{EdelHarer}. In other words, a good invariant should depend \emph{continuously} on the input data. Indeed, within the topological data analysis literature, proving a `stability theorem' for a given invariant is usually seen as a prerequisite for its adoption in practice. A stability theorem typically says that the invariant in question is Lipschitz continuous, or at least continuous, with respect to the Hausdorff or Gromov--Hausdorff metric on the relevant space of data (e.g.~\cite{CEH2005, CarlssonMemoli2010, QGFHM2024}).

Despite its increasingly widespread application, there is, to date, no stability theorem for magnitude. To describe what such a theorem should look like, let $\FMet$ denote the set of isometry classes of finite metric spaces, equipped with the Gromov--Hausdorff metric, and let \(\FMetmag \subset \FMet\) be the subspace consisting of finite metric spaces for which magnitude is defined. A conventional stability theorem for magnitude might tell us that the mapping
\begin{align}
\label{eq:mag_Fmet}
\begin{split}
    \FMetmag &\longrightarrow \mathbb{R} \\
    X &\longmapsto |X|
\end{split}
\end{align}
is continuous (or, better, Lipschitz continuous). Unfortunately, magnitude is \emph{not} continuous in this sense: examples of discontinuities can be found in \cite[Ex.~2.2.7 and Ex.~2.2.8]{LeinsterMagnitude2013} and \cite[Ex.~3.4]{roff2023small}. Yet, such examples seem to be rare---so rare, in fact, that intuition suggests one is vanishingly unlikely to encounter instances of magnitude's instability in `the wild' of real-world data.

In this paper we take the point of view that, for the purposes of applicability, the \emph{generic} behaviour of invariants is what one needs to understand, not their vanishingly rare pathologies. This seems to reflect actual practice in the domains where magnitude is being applied: for instance, in \cite[\S 3.2]{LASR2023A}, the assumption that \(\lim_{t \to 0} |tX| = 1\) is justified by the fact, proved in \cite[Thm.~2.3]{roff2023small}, that this property holds generically---though not always---for finite metric spaces.

We therefore pose the question: is magnitude \emph{generically stable} as an invariant of finite metric spaces? In other words, is the function \(X \mapsto |X|\) \emph{generically continuous} on \(\FMetmag\)? Of course, the answer will depend on how one chooses to formulate the notion of `generic continuity'. For one natural formulation, we will prove definitively that magnitude is \emph{not} generically continuous. For another natural formulation, we will provide strong evidence to suggest that it may be.

\subsection*{Summary of results}

In the first formulation, a function is generically continuous if it is continuous at a generic point of its domain:

\begin{formulationA}
    Let \(A\) and \(B\) be topological spaces. A function \(f \colon A \to B\) is generically continuous if there exists a dense open subset \(U \subseteq A\) such that \(f\) is continuous at every point of \(U\). Equivalently, \(f\) is generically continuous if there exists a nowhere-dense subset \(V \subset A\) such that \(f|_{A \backslash V} \colon A \backslash V \to B\) is continuous.
\end{formulationA}

In \Cref{sec:discontinuity} we prove that magnitude is not generically continuous in this sense. In fact, to demonstrate that magnitude fails to satisfy Formulation A, we prove the following much stronger statement.

\begin{thmnowhere}
    The function \(X \mapsto |X|\) is nowhere continuous on \(\FMetmag\).
\end{thmnowhere}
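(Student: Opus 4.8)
The plan is to show that magnitude is discontinuous at \emph{every} point of $\FMetmag$ by an explicit blow-up construction. Fix an arbitrary $X \in \FMetmag$ and a weighting $\mathbf{w}$ for it. Since $Z_X \mathbf{w} = \mathbf{1} \neq 0$ we have $\mathbf{w} \neq 0$, so there is a point $x_1 \in X$ with nonzero weight $w_{x_1} \neq 0$; this is where I will concentrate a perturbation. The engine of the argument is the existence, guaranteed by \cite[Thm.~2.3]{roff2023small}, of a finite metric space $S$ for which the small-scale limit $L := \lim_{t \to 0^+}|tS|$ fails to equal $1$ (and I will allow $L = \pm\infty$, which only makes the conclusion easier). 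The idea is to glue a vanishingly small rescaled copy of $S$ onto $X$ at $x_1$: this leaves the Gromov--Hausdorff limit equal to $X$, but changes the limiting magnitude because the cluster contributes $L \neq 1$ `effective points' rather than $1$.

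Concretely, write $X = \{x_1, \dots, x_n\}$ and $S = \{p_1, \dots, p_m\}$, and for $s > 0$ let $X_s$ be the metric space on $(X \setminus \{x_1\}) \sqcup S$ with $d(p_a, p_b) = s\, d_S(p_a, p_b)$, with $d(p_a, x_j) = d(x_1, x_j)$ for $j \geq 2$, and all remaining distances inherited from $X$. A direct check of the triangle inequality shows that $X_s$ is a genuine finite metric space, and the correspondence matching every $p_a$ to $x_1$ and fixing the remaining points has distortion at most $s\,\diam(S)$, so $d_{GH}(X_s, X) \leq \tfrac{1}{2}s\,\diam(S) \to 0$ as $s \to 0$. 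Thus $X_s \to X$ in $\FMet$.

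It remains to compute $\lim_{s\to0}|X_s|$. Ordering the points as $S$ followed by $X \setminus \{x_1\}$, the similarity matrix is
\[
Z_{X_s} = \begin{pmatrix} A_s & \mathbf{1}_m \mathbf{r}^{T} \\ \mathbf{r}\, \mathbf{1}_m^{T} & E \end{pmatrix},
\]
where $A_s = Z_{sS}$ is the similarity matrix of the cluster, $E = Z_{X \setminus \{x_1\}}$, and $\mathbf{r} = (e^{-d(x_1, x_j)})_{j \geq 2}$; the off-diagonal blocks are rank one because every cluster point is equidistant to each external point. Writing a weighting as $(\mathbf{u}; \mathbf{v})$ and solving the block system gives $A_s \mathbf{u} = (1 - \mathbf{r}^{T}\mathbf{v})\mathbf{1}_m$, whence the total cluster weight is $\mathbf{1}_m^{T}\mathbf{u} = (1 - \mathbf{r}^{T}\mathbf{v})\,\mathbf{1}_m^{T} A_s^{-1}\mathbf{1}_m = (1 - \mathbf{r}^{T}\mathbf{v})\,|sS|$. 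The crucial point is that the internal magnitude $\mathbf{1}_m^{T} A_s^{-1}\mathbf{1}_m = |sS|$ of the cluster appears directly, and tends to $L$. Eliminating and letting $s \to 0$ yields the closed form
\[
\lim_{s \to 0}|X_s| - |X| = w_{x_1}^{2}\,\frac{(1 - \gamma)(L - 1)}{1 - L\gamma}, \qquad \gamma := \mathbf{r}^{T} E^{-1}\mathbf{r},
\]
using that the weight of $x_1$ in $X$ is $w_{x_1} = (1 - \mathbf{r}^{T}E^{-1}\mathbf{1})/(1 - \gamma)$ and that $1 - \gamma = \det Z_X / \det E \neq 0$. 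Since $w_{x_1} \neq 0$ and $L \neq 1$, the right-hand side is nonzero (and if $1 - L\gamma = 0$ the limit is infinite); either way $|X_s| \not\to |X|$, so magnitude is discontinuous at $X$.

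The main obstacle is to make the singular limit $s \to 0$ rigorous and to treat non-generic base spaces. Because $A_s \to J_m$ is rank one the perturbation is \emph{singular}: $\det Z_{X_s} \to 0$, so the block inversion above is valid only for $s$ outside the zeros of $\det Z_{X_s}$, which are isolated by real-analyticity in $s$; one must choose the approximating sequence $s_k \to 0$ to avoid these and to guarantee $X_{s_k} \in \FMetmag$. The derivation also assumed $E = Z_{X \setminus \{x_1\}}$ and $Z_X$ invertible; for the exceptional spaces on which a weighting exists despite $\det Z_X = 0$ (or where every point-deletion leaves a singular block) the explicit formula must be replaced by a limiting argument, for instance by perturbing within the relevant submatrices before passing to the limit, or by checking directly that the weight equations force $\lim_s |X_s| \neq |X|$ whenever $\mathbf{w} \neq 0$. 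Carrying out this bookkeeping---and confirming that $L \neq 1$ can indeed be realised---is the only delicate part; the geometric mechanism, namely that a shrinking non-trivial cluster smuggles $L \neq 1$ effective points into the Gromov--Hausdorff limit, is robust and available at every point of $\FMetmag$.
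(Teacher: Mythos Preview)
Your construction is correct in the generic case and the block-matrix computation checks out, but the approach is genuinely different from the paper's, and the paper's route is both shorter and gap-free.

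The paper does not blow up a point of $X$; instead it takes the \emph{wedge} $X \vee tY$, where $Y$ is any fixed space lacking the one-point property (concretely $Y = K_3 +_1 2K_3$, with $\lim_{t\to 0}|tY| = 6/5$). The key input is Leinster's wedge formula $|A \vee B| = |A| + |B| - 1$, valid whenever $A$ and $B$ both have magnitude---no invertibility of any similarity matrix is required. One line then gives $|X \vee tY| \to |X| + 1/5 \neq |X|$. This handles every $X \in \FMetmag$ uniformly, including those with $\det Z_X = 0$, and needs no choice of a point with nonzero weight.

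By contrast, your argument rests on inverting both $Z_X$ and $E = Z_{X\setminus\{x_1\}}$, and you correctly flag that the degenerate cases are left open. Those gaps are real: you need a point $x_1$ with $w_{x_1}\neq 0$ \emph{and} $\det Z_{X\setminus\{x_1\}}\neq 0$, and there is no obvious reason both conditions can be met simultaneously for every $X \in \FMetmag$, let alone when $\det Z_X = 0$ but a weighting still exists. The ``perturbing within submatrices'' fix you suggest would itself have to be justified carefully (perturbing $X$ moves you off the point at which you are trying to prove discontinuity). So while your mechanism is sound and yields a pleasant explicit discrepancy formula $w_{x_1}^2(1-\gamma)(L-1)/(1-L\gamma)$ in the generic case, the paper's wedge argument simply sidesteps all of this bookkeeping.

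One small correction: \cite[Thm.~2.3]{roff2023small} is the statement that the one-point property holds \emph{generically}; it does not furnish a counterexample. The spaces $S$ with $\lim_{t\to 0}|tS|\neq 1$ that you need are those of Example~\ref{ex:willerton} (originally \cite[Ex.~2.2.8]{LeinsterMagnitude2013} and \cite[Ex.~3.4]{roff2023small}).
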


\Cref{sec:discontinuity} also contains new examples illustrating just how `badly' continuity can fail. In \cite[Thm.~3.8]{roff2023small} it was shown that for every real number \(\alpha \geq 1\), there exists a finite metric space \(X\) such that \(\lim_{t \to 0} |tX| = \alpha\), but the question was left open of whether there exists \(X\) such that \(\lim_{t \to 0} |tX| < 1\) or \(\lim_{t \to 0} |tX| = +\infty\). That question is not answered here, but in \Cref{ex:perturb} we show that if one considers arbitrary paths \((X_s)_{0 < s \ll 1}\) in \(\FMetmag\) converging to the one-point space, then \(\lim_{s \to 0} |X_s|\) can take any value in the extended real line. We also consider certain paths converging to a two-point space \(K_2\), and describe the associated sequences of magnitude functions. We demonstrate that the pointwise limit of these functions need not display any of the good qualitative features of the magnitude function of \(K_2\), such as continuity or monotonicity with respect to the scale factor \(t\), or meaningful behaviour in the large- or small-scale limit.

Our more positive results are contained in Sections \ref{sec:lines} and \ref{sec:conti}. The idea is to reformulate generic continuity in sequential terms, as follows.
\begin{formulationB}
    Let \(A\) and \(B\) be metric spaces. A function \(f \colon A \to B\) is generically continuous if, for a generic convergent sequence \((X_k)_{k \in \mathbb{N}}\) in \(A\), we have 
    \[\lim_{k \to \infty} f(X_k) = f\left(\lim_{k \to \infty} X_k \right).\]
\end{formulationB}

Formulation B is not quite precise, since is not clear \textit{a priori} what `generic' ought to mean for sequences. Moreover, our techniques do not yet allow us to study the behaviour of magnitude with respect to arbitrary sequences in \(\FMetmag\). Instead, we restrict attention to a particular class of sequences, or paths, which generalize those of the form \((sX)_{0 < s \ll 1}\) and can be thought of as the straight lines in Gromov--Hausdorff space. In \Cref{sec:lines} we define, for each finite metric space \(X\), a topological space of lines converging to \(X\). \Cref{sec:conti} contains our main result, which is a first attempt at a `generic stability theorem' for magnitude:

\begin{thmmain}
    For any finite metric space \(X\) such that \(Z_X\) is invertible, the space of lines to \(X\) contains a dense open subset whose elements satisfy \(\lim_{s \to 0} |Y_s| = |X|\).
\end{thmmain}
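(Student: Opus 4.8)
The plan is to analyse $|Y_s|$ directly through the similarity matrix and to extract its $s\to 0$ limit despite a singular degeneration. Present a line to $X=\{x_1,\dots,x_n\}$ by a surjection $\pi\colon Y\surj X$ together with a symmetric ``direction'' matrix $A$, so that $Y_s=(Y,\,\pi^* d_X+sA)$; the fibre $C_i=\pi^{-1}(x_i)$ collapses onto $x_i$ at rate $s$, and on $C_i$ the space $Y_s$ is the scaled space $sA_i$ with $A_i=A|_{C_i}$. The first step is the factorisation $Z_{Y_s}(y,y')=e^{-d_X(\pi y,\pi y')}\,e^{-sA(y,y')}$, whence $Z_{Y_s}\to\pi^* Z_X$ entrywise as $s\to 0$. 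Because $Z_X$ is invertible, $\pi^* Z_X$ (which merely repeats rows and columns of $Z_X$) has rank $n$, and so is singular as soon as some fibre has more than one point. This degeneration is the whole difficulty: $Z_{Y_s}^{-1}$ diverges, yet we must show $|Y_s|=\sumofentry(Z_{Y_s}^{-1})$ stays bounded and tends to $|X|$.

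To see what the limit should be, I would test the weighting equation $Z_{Y_s}w_s=\mathbf{1}$ block by block against the weightings of the collapsing clusters. Let $u_s^{(i)}$ solve $Z_{sA_i}u_s^{(i)}=\mathbf{1}_{C_i}$, so that $\mathbf{1}^T u_s^{(i)}=|sA_i|$, and write $W_i^{(s)}=\mathbf{1}^T w_s^{(i)}$ for the total weight carried by $C_i$, so that $|Y_s|=\sum_i W_i^{(s)}$. Multiplying the $i$-th block row on the left by $(u_s^{(i)})^T$ and using symmetry of $Z_{sA_i}$ gives $W_i^{(s)}$ on the diagonal and $|sA_i|$ on the right exactly, while each off-diagonal block tends to $Z_X(x_i,x_j)\,|sA_i|\,W_j^{(s)}$; passing to the limit produces the reduced $n\times n$ system $(Z_X-D)W=\mathbf{1}$ with $D=\operatorname{diag}\!\big(\tfrac{\ell_i-1}{\ell_i}\big)$, where $\ell_i=\lim_{s\to0}|sA_i|$ and $W=(W_i)_i$ with $W_i=\lim_{s\to0}W_i^{(s)}$. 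Thus the limiting magnitude is $\mathbf{1}^T(Z_X-D)^{-1}\mathbf{1}$. The dense open set of good lines is exactly the one on which every cluster is nondegenerate with $\ell_i=1$: by \cite[Thm.~2.3]{roff2023small} this is the generic behaviour of the within-cluster direction, it is visibly an open condition, and when it holds $D=0$, so the reduced system collapses to $Z_X W=\mathbf{1}$, i.e.\ $W$ is the weighting of $X$ and $\mathbf{1}^T W=|X|$.

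The main obstacle is justifying these limits, since the weightings $u_s^{(i)}$ and $w_s^{(j)}$ typically blow up like $s^{-1}$ on the subspace orthogonal to the constants, so the discarded off-diagonal correction $(u_s^{(i)})^T\big(E_{ij}(s)-J_{ij}\big)w_s^{(j)}$---where $E_{ij}(s)$ has entries $e^{-sA(y,y')}$ and $J_{ij}$ is all-ones---is not obviously negligible. Rather than estimate it by hand, I expect the cleanest rigorous route is the closed formula $|Y_s|=\sumofentry(\adj Z_{Y_s})/\det Z_{Y_s}$: numerator and denominator are analytic in $s$ and vanish to a common order at $s=0$, and the computation reduces to identifying their leading coefficients in the expansion $Z_{Y_s}=\pi^* Z_X-sM+O(s^2)$, with $M$ the distance matrix of $Y$. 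Showing that both leading coefficients are nonzero---this is where invertibility of $Z_X$ and the conditions $\ell_i=1$ re-enter---simultaneously establishes that $Z_{Y_s}$ is invertible for all small $s>0$ (so $|Y_s|$ is genuinely defined along the line) and that the ratio of the leading coefficients is exactly $|X|$. Finally I would check that the good set is open, since nondegeneracy of each cluster and of the leading coefficients persists under small perturbations of $A$ within each clustering type $\pi$, and dense, since $\ell_i=1$ is generic; this would complete the proof.
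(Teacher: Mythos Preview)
Your paragraph~3 is exactly the approach the paper takes: write $|Y_s|=\sumofentry(\adj Z_{Y_s})/\det(Z_{Y_s})$, expand numerator and denominator in powers of $s$, and read off the limit as the ratio of the first nonvanishing coefficients. So the overall route is right. The problem is that you do not carry the computation out, and you import from paragraph~2 a description of the good set---namely, that every cluster has the one-point property, $\ell_i=1$---which that heuristic does not actually justify.

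Concretely, the reduced-system formula $\lim_{s\to 0}|Y_s|=\mathbf 1^\top(Z_X-D)^{-1}\mathbf 1$ is \emph{false} in general. Take $X=K_2$ and the line $W_t=K_2\vee_{b,y} tY_\alpha$ (the very line used in the proofs of \Cref{thm:nowhere} and \Cref{thm:2points_fnc}). Here the fibre over $a$ is a point and the fibre over $b$ is $Y_\alpha$, so $\ell_1=1$, $\ell_2=\alpha$. Your formula gives
\[
\mathbf 1^\top\!\begin{pmatrix}1&e^{-1}\\ e^{-1}&1/\alpha\end{pmatrix}^{\!-1}\!\mathbf 1
=\frac{1+\alpha-2\alpha e^{-1}}{1-\alpha e^{-2}},
\]
which is the limit for the \emph{join} line $\{\ast\}+_1 tY_\alpha$ (where the cross-cluster part of $A$ vanishes), not for the wedge line, whose true limit is $|K_2|+\alpha-1$ by \Cref{lem:wedgesum_join}\,\eqref{lem:wedgesum}. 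For $\alpha=2$ these differ numerically ($\approx 2.10$ versus $\approx 2.46$). The discrepancy comes precisely from the term you flagged: $(u_s^{(i)})^\top(E_{ij}(s)-J_{ij})\,w_s^{(j)}$ is \emph{not} $o(1)$, because $w_s^{(j)}$ genuinely blows up along $\mathbf 1^\perp$ while $E_{ij}(s)-J_{ij}=O(s)$ carries nontrivial components there. So the heuristic cannot be trusted to identify the good set, and your claim that ``the conditions $\ell_i=1$ re-enter'' in the $\adj/\det$ computation is, at this point, a hope rather than an argument.

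What the paper actually does is compute those leading coefficients. Both $\det Z_{Y_s}$ and $\sumofentry(\adj Z_{Y_s})$ vanish to order exactly $n-m$ (where $n=\#Y$, $m=\#X$); the coefficient of $(-s)^{n-m}$ in $\det Z_{Y_s}$ factors as $\det(Z_X)\cdot F_{X,\mathbf r}$, and in $\sumofentry(\adj Z_{Y_s})$ as $\sumofentry(\adj Z_X)\cdot F_{X,\mathbf r}$, for a single explicit polynomial $F_{X,\mathbf r}$ in \emph{all} the entries of $d_Y$---including the cross-cluster distances, not only the within-cluster ones. The good set is $\{F_{X,\mathbf r}\neq 0\}$, and one checks it is nonempty by evaluating $F_{X,\mathbf r}$ at a single carefully chosen $Y$. (A small side-remark: in your expansion $Z_{Y_s}=\pi^*Z_X-sM+O(s^2)$ the matrix $M$ is not the distance matrix of $Y$; its entries are $e^{-d_X(\pi y,\pi y')}\,A(y,y')$.) The missing piece of your proposal is precisely this computation; once you attempt it you will see that the cross-cluster data cannot be discarded, and the criterion you end up with is not the purely within-cluster condition $\ell_i=1$.
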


Since \(Z_X\) is generically invertible, \Cref{thm:main} can be interpreted as saying that magnitude behaves continuously for generic limits taken along lines in Gromov--Hausdorff space. This offers strong evidence to suggest that magnitude may indeed be generically continuous in the sense of Formulation B.

\subsection*{Open questions}

This paper does not answer the question in the title; we do not even insist upon a particular interpretation of `generic continuity'. Thus, several questions are left open. The first is already apparent:

\begin{question}
\label{q:all_paths}
Is magnitude generically continuous in the sense of Formulation B? That is, does a version of \Cref{thm:main} hold with respect to a space of all sequences in \(\FMetmag\)?
\end{question}

There may also be other reasonable formulations of generic continuity that we have not considered here.

An alternative route to a stability theorem for magnitude would be to look for a class of metric spaces such that if \(X\) belongs to this class, then magnitude is continuous with respect to the Hausdorff topology on finite subsets of \(X\). One good candidate might be the class of \demph{positive definite} spaces. In \cite{MeckesPositive2013}, Meckes defines a positive definite metric space to be one such that every finite subspace has positive definite similarity matrix, and he proves that magnitude is lower semicontinuous---though not continuous---on the Gromov--Hausdorff space of compact positive definite metric spaces \cite[Thm.~2.6]{MeckesPositive2013}. He also poses the following question:

\begin{question}
\label{q:pos_def}
    Given a positive definite metric space \(W\), is magnitude continuous with respect to the Hausdorff topology on the set of compact subspaces of \(W\)?
\end{question}

Since the metric induced by the \(\ell^p\)-norm on \(\mathbb{R}^n\) is positive definite for \(1 \leq p \leq 2\) \cite[Thm.~3.6]{MeckesPositive2013}, a positive answer to \Cref{q:pos_def} would imply in particular that magnitude is continuous for finite subsets of these spaces. This would be especially valuable to know for the purposes of applications, which often concern data embedded into \(\mathbb{R}^n\) with the Euclidean (\(\ell^2\)) or taxicab (\(\ell^1\)) metric.

Moreover, in the setting of finite-dimensional, positive definite normed spaces such as these, the techniques of Meckes \cite{MeckesMagnitude2019} are available, which seem likely to facilitate stronger results on continuity. Indeed, the strongest known result in this direction says that magnitude is continuous on the class of \(n\)-dimensional compact convex sets in any \(n\)-dimensional positive definite normed space \cite[Thm.~4.15]{LeinsterMagnitude2017b}. (In fact, the statement is a bit stronger than that.) And in Leinster and Meckes \cite[Thm.~3.1]{LeinsterMeckes2021}, the techniques of \cite{MeckesMagnitude2019} are used to prove that if \(U\) is a finite-dimensional, positive definite normed space then every compact subset \(X\) of \(U\) satisfies \(\lim_{t \to 0} |tX| = 1\). (Other relevant remarks by Meckes concerning the behaviour of magnitude in this setting can be found in \cite{MeckesMagnitude2019}, following the proof of Theorem 1.) We therefore close this introduction by stating a conjecture.

\begin{conjecture}
\label{conj:euclidean}
Let \(U\) be a finite-dimensional, positive definite normed space. Then the function \(X \mapsto |X|\) is continuous with respect to the Hausdorff topology on the set of finite subsets of \(U\). In particular, magnitude is stable for finite subsets of Euclidean or taxicab space.
\end{conjecture}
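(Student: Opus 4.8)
The plan is to work inside the reproducing kernel Hilbert space (RKHS) attached to \(U\). Since \(U\) is positive definite, the Laplace kernel \(K(x,y) = e^{-\|x - y\|}\) is positive definite on all of \(U\), so it determines an RKHS \(\mathcal H\) of continuous functions with reproducing identity \(f(x) = \langle f, K_x\rangle\), where \(K_x = K(\cdot, x)\). For a finite set \(X = \{p_1, \dots, p_m\}\) with weighting \(\bm w\), the potential \(h_X := \sum_i \bm w(p_i) K_{p_i}\) satisfies \(h_X|_X \equiv 1\) and \(\|h_X\|_{\mathcal H}^2 = |X|\); moreover it is the minimal-norm interpolant, so that
\[
|X| = \min\{\|f\|_{\mathcal H}^2 : f \in \mathcal H,\ f|_X \equiv 1\}.
\]
Two features of this variational formula drive everything. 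First, since \(K_x(x) = 1\), the reproducing identity gives \(|f(x)| \le \|f\|_{\mathcal H}\), and since each \(K_{p_i}\) is \(1\)-Lipschitz, \(h_X\) is \(\|\bm w\|_1\)-Lipschitz. Second, the formula immediately yields Meckes's monotonicity \(A \subseteq B \Rightarrow |A| \le |B|\). Now suppose \(X_n \to X\) in the Hausdorff metric. Passing to the tail, each \(X_n\) is a disjoint union of clusters \(C_p\) (\(p \in X\)), where \(C_p\) consists of the points of \(X_n\) within \(\epsilon_n\) of \(p\) and \(\epsilon_n \to 0\); every cluster is nonempty with diameter \(\le 2\epsilon_n\), and since all \(X_n\) lie in a fixed ball, monotonicity bounds \(|X_n| \le |\overline{B}| < \infty\) uniformly.

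For lower semicontinuity I would argue by weak compactness. Let \(g_n\) be the optimiser above for \(X_n\), so \(\|g_n\|^2 = |X_n|\) is bounded; pass to a weakly convergent subsequence \(g_n \rightharpoonup g_\infty\) with \(\|g_\infty\|^2 \le \liminf |X_n|\). For each \(p \in X\) choose \(x_n \in C_p\); then \(K_{x_n} \to K_p\) in norm (as \(\|K_{x_n} - K_p\|^2 = 2 - 2e^{-\|x_n - p\|} \to 0\)), so \(g_\infty(p) = \langle g_\infty, K_p\rangle = \lim \langle g_n, K_{x_n}\rangle = \lim g_n(x_n) = 1\). Thus \(g_\infty\) is feasible for \(X\) and \(|X| \le \|g_\infty\|^2 \le \liminf |X_n|\), recovering in this setting Meckes's lower semicontinuity \cite{MeckesPositive2013}.

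Everything then hinges on upper semicontinuity, \(\limsup |X_n| \le |X|\), which is the heart of the matter and where the hypotheses exclude the known Gromov--Hausdorff counterexamples. Writing the target values on \(X_n\) as \(\bm 1 = h_X|_{X_n} + v_n\), the triangle inequality in the \(Z_{X_n}^{-1}\)-inner product gives \(\sqrt{|X_n|} \le \sqrt{(h_X|_{X_n})^\top Z_{X_n}^{-1}(h_X|_{X_n})} + \|r_n\| \le \sqrt{|X|} + \|r_n\|\), where \(\|r_n\|^2 = v_n^\top Z_{X_n}^{-1} v_n\) is the squared norm of the minimal interpolant of the residual \(v_n(x) = 1 - h_X(x)\). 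The crucial point is that this residual is \emph{small and Lipschitz-flat}: because \(h_X(p) = 1\) and \(h_X\) is \(\|\bm w\|_1\)-Lipschitz, one has \(|v_n(x)| \le \|\bm w\|_1\,\epsilon_n\) and \(|v_n(x) - v_n(y)| \le \|\bm w\|_1\,\|x - y\|\) within each cluster. On a two-point cluster of spacing \(\delta\) a direct check gives \(v_n^\top Z_{X_n}^{-1} v_n = O(\epsilon_n) \to 0\), the Lipschitz bound exactly cancelling the near-singularity \(1 - e^{-\delta} \approx \delta\) of the intra-cluster block; the aim is to promote this to \(\|r_n\|^2 \to 0\) in general.

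The main obstacle is precisely this estimate when clusters are large or their cardinalities grow with \(n\) (for instance when \(X_n\) is a finer and finer net of shrinking balls). Here I would combine two ideas. A block (Schur-complement) analysis separates \(Z_{X_n}\) into a well-conditioned inter-cluster part, converging to the invertible \(Z_X\), and the near-singular intra-cluster blocks \(J - E\); Lipschitz-flatness should confine \(v_n\) to the subspace on which these blocks stay controlled, as in the two-point model. Where cardinalities blow up I would instead sandwich by monotonicity: \(X_n\) lies inside \(B_n = \bigcup_{p \in X}\overline{B}(p, \epsilon_n)\), so \(|X| \le |X_n| \le |B_n|\), and it remains to prove \(|B_n| \to |X|\); the lower bound is again monotonicity (\(X \subseteq B_n\)), while the upper bound would feed the shrinking balls into the small-scale asymptotics of Leinster and Meckes \cite{LeinsterMeckes2021} (each \(\overline{B}(p,\epsilon_n)\) has magnitude tending to \(1\)) together with the potential-theoretic techniques of \cite{MeckesMagnitude2019}, available precisely because \(U\) is a finite-dimensional positive definite normed space. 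I expect controlling this continuum limit of the shrinking-ball union to be the genuinely hard step; the finite, well-separated case follows already from continuity of \(X \mapsto Z_X^{-1}\) away from collisions.
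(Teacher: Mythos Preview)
This statement is presented in the paper as a \emph{conjecture}, not a theorem; the authors give no proof, and the closely related \Cref{q:pos_def} of Meckes is explicitly left open. So there is no paper proof to compare against, and the question is whether your sketch closes the gap. It does not, and you acknowledge as much.

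Your RKHS setup and lower-semicontinuity argument are correct and essentially Meckes's. The content is all in upper semicontinuity, and both routes you propose stop short. For the residual estimate \(v_n^\top Z_{X_n}^{-1} v_n \to 0\): the two-point computation succeeds because the one small eigenvalue of the cluster block is of order \(\delta\) and Lipschitz flatness makes \(v_n\) constant to order \(\delta\). With \(k\ge 3\) clustered points you have not shown that all small eigenvalues of the cluster block are bounded below by \(c\delta\); if some are \(o(\delta)\) for particular geometries, Lipschitz control of \(v_n\) no longer cancels the blow-up of \(Z_{X_n}^{-1}\), and one would then need higher-order flatness of \(1-h_X\)---delicate, since \(h_X\) is only Lipschitz, not \(C^1\), at the points of \(X\). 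Your fallback sandwich \(|X_n|\le|B_n|\) with \(B_n=\bigcup_p\overline B(p,\epsilon_n)\) trades the problem for an equally open one: showing \(|B_n|\to|X|\) is a continuity statement for compact non-convex sets, not implied by the Leinster--Meckes one-point result (which concerns a single set shrinking to a point) or by the convex-body theorem. Minor slip: the chain \(|X|\le|X_n|\le|B_n|\) is unjustified, since \(X\not\subseteq X_n\); monotonicity gives only \(|X|\le|B_n|\) and \(|X_n|\le|B_n|\) separately.
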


\subsection*{Acknowledgements}

This work was partially supported by JSPS KAKENHI JP22K18668, JP23H00081. We are grateful to Sergei O.~Ivanov for useful conversations, and to Mark Meckes for valuable comments on the first version of this paper.


\section{Magnitude is nowhere continuous on Gromov--Hausdorff space}
\label{sec:discontinuity}

In this section, we establish that magnitude fails to be generically continuous in the sense of Formulation A. Indeed, we prove that magnitude is \emph{nowhere} continuous on \(\FMet\), and we construct new examples illustrating the nature of its discontinuities. Our examples all make use of two basic constructions on metric spaces---the wedge and the join---and a lemma, due to Leinster, which describes the behaviour of magnitude with respect to these constructions.

\begin{definition}
    Let $A$ and $B$ be metric spaces with chosen points $a\in A$ and 
    $b\in B$. The \demph{wedge} of $A$ and $B$ over $a$ and $b$, denoted $A\vee_{a,b} B$, 
    is the metric space with point set $A\sqcup B/a \sim b$ and distance function
    \[
    d(u, v)=
    \begin{cases}
        d_A(u, v) & \text{ if } u, v\in A\\
        d_B(u, v) & \text{ if } u, v\in B\\
        d_A(u, a)+d_B(b, v) & \text{ if } u\in A, v\in B. 
    \end{cases}
    \]
    Where the choice of points $a$ and $b$ is understood, or is not important, 
    we will write $A\vee B$ for $A\vee_{a,b} B$
\end{definition}

\begin{definition}
Let $A$ and $B$ be metric spaces. For each \(\ell \geq \frac{1}{2} \diam (A), \frac{1}{2}\diam (B)\), the \demph{distance-\(\ell\) join} of \(A\) and \(B\), denoted by $A+_\ell B$, is the metric space with point set $A\sqcup B$ and distance function 
\[
d(u, v)=
\begin{cases}
d_{A}(u, v)& \text{ if } u, v\in A\\
d_{B}(u, v)& \text{ if } u, v\in B\\
\ell& \text{ if } u\in A, v\in B. 
\end{cases}
\]
Note that $t(A+_\ell B)=tA+_{t\ell}tB$.
\end{definition}

\begin{figure}[htbp]
\centering
\begin{tikzpicture}

\coordinate (A11) at (0.5,1.3); 
\coordinate (A12) at (0.8,0.6); 
\coordinate (A13) at (1.35,1.1); 
\coordinate (AB1) at (2,1); 

\coordinate (B11) at (2.5,0.75); 
\coordinate (B12) at (3,1.2); 
\coordinate (B13) at (3.35,0.7); 
\coordinate (B14) at (3.6,1.1); 

\filldraw[fill=black, draw=black] (A11) circle[radius=0.05];
\filldraw[fill=black, draw=black] (A12) circle[radius=0.05];
\filldraw[fill=black, draw=black] (A13) circle[radius=0.05];
\filldraw[fill=black, draw=black] (AB1) circle[radius=0.05];
\draw[densely dashed] (1,1) circle [x radius=1,y radius=0.6];
\draw (1,1.6) node[above] {$A$};

\filldraw[fill=black, draw=black] (AB1) node[left] {$a$} node[right] {$b$};

\filldraw[fill=black, draw=black] (B11) circle[radius=0.05];
\filldraw[fill=black, draw=black] (B12) circle[radius=0.05];
\filldraw[fill=black, draw=black] (B13) circle[radius=0.05];
\filldraw[fill=black, draw=black] (B14) circle[radius=0.05];
\draw[densely dashed] (3,1) circle [x radius=1,y radius=0.6];
\draw (3,1.6) node[above] {$B$};


\coordinate (A21) at (5.5,-0.2); 
\coordinate (A22) at (6.5,0.1); 
\coordinate (A23) at (7.5,-0.1); 

\filldraw[fill=black, draw=black] (A21) circle[radius=0.05];
\filldraw[fill=black, draw=black] (A22) circle[radius=0.05];
\filldraw[fill=black, draw=black] (A23) circle[radius=0.05];
\draw[densely dashed] (6.5,0) circle [x radius=2,y radius=0.4];
\draw (8.5,0) node[right] {$A$};

\coordinate (B21) at (5.8,2.4); 
\coordinate (B22) at (7.3,2.6); 

\filldraw[fill=black, draw=black] (B21) circle[radius=0.05];
\filldraw[fill=black, draw=black] (B22) circle[radius=0.05];
\draw[densely dashed] (6.5,2.5) circle [x radius=2,y radius=0.4];
\draw (8.5,2.5) node[right] {$B$};

\draw[ultra thin] (A21)--(B21);
\draw[ultra thin] (A22)--(B21);
\draw[ultra thin] (A23)--(B21);
\draw[ultra thin] (A21)--(B22);
\draw[ultra thin] (A22)--(B22);
\draw[ultra thin] (A23)--(B22);

\draw[ultra thin, <->] (8,0.4)-- node[right] {$\ell$} (8,2.1);

\end{tikzpicture}
\caption{The wedge $A\vee_{a,b}B$ and join $A+_\ell B$.}
\label{fig:wj}
\end{figure}
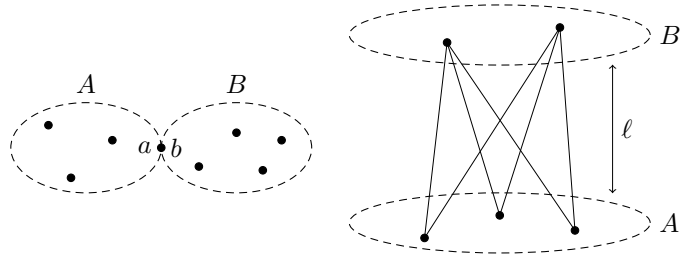

The two parts of the next lemma are proved by Leinster as Corollary 2.3.3 and Proposition 2.1.13 in \cite{LeinsterMagnitude2013}.

\begin{lemma}
\label{lem:wedgesum_join}
Let $A$ and $B$ be metric spaces which both have magnitude.
\begin{enumerate}
    \item \label{lem:wedgesum}
    For each \(a \in A\) and \(b \in B\), the wedge $A\vee_{a,b}B$ has magnitude
    \[|A\vee_{a,b}| = |A|+|B|-1.\]
    \item \label{lem:join}
    For each $ \ell \geq \frac{1}{2}\diam (A), \frac{1}{2}\diam (B)$, the join \(A +_\ell B\) has magnitude
    \[
    |A+_\ell B|=\frac{|A|+|B|-2e^{-\ell}|A||B|}{1-e^{-2\ell}|A||B|}.
    \]
\end{enumerate}
\end{lemma}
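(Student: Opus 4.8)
The plan for both parts is to avoid inverting the similarity matrix directly and instead build an explicit weighting for the composite space out of chosen weightings $\bm{w}_A$ for $A$ and $\bm{w}_B$ for $B$, then read off the magnitude as the sum of entries. Since the entry-sum of a weighting is independent of the weighting used \cite[Lem.~1.1.2]{LeinsterMagnitude2013}, it suffices to exhibit one valid weighting in each case; the existence of that weighting then simultaneously proves that the composite space has magnitude. The only genuine subtlety across the two parts is this existence question, which becomes a live issue for the join.

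For the wedge, the essential observation is that cross-distances factorize: for $u \in A$ and $v \in B$ we have $d(u,v) = d_A(u,a) + d_B(b,v)$, so the off-diagonal block of the similarity matrix satisfies $Z(u,v) = Z_A(u,a)\,Z_B(b,v)$. Writing $p$ for the glued point $a \sim b$, I would set $\bm{w}(u) = \bm{w}_A(u)$ on $A \setminus \{a\}$, $\bm{w}(v) = \bm{w}_B(v)$ on $B \setminus \{b\}$, and $\bm{w}(p) = \bm{w}_A(a) + \bm{w}_B(b) - 1$, and then check the weighting equation $(Z\bm{w})(x) = 1$ in the three cases $x \in A \setminus \{a\}$, $x \in B \setminus \{b\}$, and $x = p$. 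In each case the rank-one factorization of the cross block lets me pull the relevant factor $Z_A(u,a)$ or $Z_B(b,v)$ outside a sum, and the weighting identities for $A$ and $B$ (for instance $\sum_{w \neq a} Z_A(a,w)\bm{w}_A(w) = 1 - \bm{w}_A(a)$) force the surplus contributions to cancel against the $-1$ correction carried at $p$. Summing all entries, the $A$-weights contribute $|A|$, the $B$-weights contribute $|B|$, and the $-1$ built into $\bm{w}(p)$ accounts for the rest, giving $|A| + |B| - 1$.

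For the distance-$\ell$ join the similarity matrix is block-structured, with diagonal blocks $Z_A$ and $Z_B$ and both off-diagonal blocks constantly equal to $q := e^{-\ell}$. This constancy invites the scaled ansatz $\bm{w}|_A = \alpha\,\bm{w}_A$ and $\bm{w}|_B = \beta\,\bm{w}_B$. Imposing the weighting condition at a single $A$-point and a single $B$-point, and using $\sum \bm{w}_A = |A|$ and $\sum \bm{w}_B = |B|$, collapses everything to the $2 \times 2$ system $\alpha + q|B|\beta = 1$, $q|A|\alpha + \beta = 1$; solving gives $\alpha = (1 - q|B|)/(1 - q^2|A||B|)$ and $\beta = (1 - q|A|)/(1 - q^2|A||B|)$, whereupon the magnitude $\alpha|A| + \beta|B|$ simplifies to the claimed expression. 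The main obstacle is the denominator $1 - e^{-2\ell}|A||B|$: the ansatz produces a genuine weighting only when this quantity is nonzero, so I would emphasise that the hypothesis that $A +_\ell B$ has magnitude is precisely what guarantees the system is solvable, and that the stated formula is to be read under this nonvanishing condition. The constraint $\ell \ge \tfrac12\diam(A), \tfrac12\diam(B)$, meanwhile, is exactly what ensures the triangle inequality holds across the join, so that $A +_\ell B$ is a bona fide metric space in the first place.
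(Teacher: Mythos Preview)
Your argument is correct in both parts: the explicit weighting $\bm{w}(p)=\bm{w}_A(a)+\bm{w}_B(b)-1$ for the wedge, and the scaled ansatz $\bm{w}|_A=\alpha\,\bm{w}_A$, $\bm{w}|_B=\beta\,\bm{w}_B$ for the join, both verify the weighting equation exactly as you describe, and the entry sums give the stated formulas.

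As for comparison with the paper: there is nothing to compare, since the paper does not prove this lemma at all---it simply cites Leinster \cite{LeinsterMagnitude2013} (Corollary~2.3.3 for the wedge and a proposition from \S2.1 for the join). Your direct weighting constructions are, in fact, essentially the arguments Leinster gives in that reference. Your flag on the denominator $1-e^{-2\ell}|A||B|$ is well placed: the formula must be read under the tacit proviso that this quantity is nonzero, and indeed the paper itself later exploits its vanishing to produce singularities. One small refinement: when the denominator vanishes your ansatz fails, but that alone does not show the join lacks \emph{every} weighting---only that it lacks one of this particular shape; you would need a separate remark (or simply the standing nondegeneracy hypothesis) to close that case fully.
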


Now, denote by $K_n$ the metric space determined by the complete graph on $n$ vertices---that is, the $n$-point metric space with $d(x,x')=1$ whenever \(x \neq x'\). Then
\begin{equation}
\label{eq:Kn_mag}
    |t K_n|=\frac{n}{1+(n-1)e^{-t}}
\end{equation}
by \cite[Ex.~2.1.6]{LeinsterMagnitude2013}.

\begin{example}
\label{ex:willerton}
A metric space \(X\) is said to have \demph{the one-point property} if 
\[\lim_{t \to 0} |tX| = 1.\]
Joins of complete graphs provide essentially all known examples of spaces lacking the one-point property. Two in particular will be useful to us below.
\begin{enumerate}
\item \cite[Example 2.3.8]{LeinsterMagnitude2013}
Let $X=K_3+_1 2K_3$. Then $\lim_{t\to 0}|tX|=\sfrac{6}{5}$. 
\item \cite[Example 3.4]{roff2023small}
Let $X=\frac{4}{3}K_2+_1 2K_3$. Then $\lim_{t\to 0}|tX|=\sfrac{7}{6}$. 
\end{enumerate}
\end{example}

We can use \Cref{lem:wedgesum_join}, along with \Cref{ex:willerton}, to prove that magnitude fails absolutely to be continuous at a generic point of Gromov--Hausdorff space. That is, it fails to be generically continuous in the sense of Formulation A.

\begin{theorem}
\label{thm:nowhere}
    The function \(X \mapsto |X|\) is nowhere continuous on $\FMetmag$. 
\end{theorem}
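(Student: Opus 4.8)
The plan is to show that magnitude is discontinuous at \emph{every} point of \(\FMetmag\) by exhibiting, for each such \(X\), a path converging to \(X\) along which magnitude does not converge to \(|X|\). The mechanism for creating the discontinuity is to wedge \(X\) with a vanishingly small copy of a space that fails the one-point property: since the magnitude of a wedge is additive up to the constant \(-1\), attaching a tiny space whose magnitude does \emph{not} tend to \(1\) perturbs the total magnitude by a fixed nonzero amount, even as the attached space collapses to a point.

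Concretely, I would fix \(X \in \FMetmag\) together with a basepoint \(x \in X\), and set \(W = K_3 +_1 2K_3\), so that \(\lim_{s \to 0}|sW| = \sfrac{6}{5}\) by \Cref{ex:willerton}. Choosing any basepoint \(w \in W\), define the path
\[
X_s \;=\; X \vee_{x,w} (sW) \qquad (0 < s \ll 1).
\]
First I would check that each \(X_s\) lies in \(\FMetmag\): for small \(s\) the rescaled space \(sW\) has magnitude (this is already presupposed by the limit statement in \Cref{ex:willerton}), so by the wedge part of \Cref{lem:wedgesum_join} the wedge \(X_s\) also has magnitude, with \(|X_s| = |X| + |sW| - 1\).

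Next I would verify the Gromov--Hausdorff convergence \(X_s \to X\). The copy of \(X\) sits isometrically inside \(X_s\), and every point \(v\) of the attached factor satisfies \(d(v,x) = s\,d_W(v,w) \leq s\diam(W)\); hence \(X\) is \(s\diam(W)\)-dense in \(X_s\), which gives \(d_{\mathrm{GH}}(X, X_s) \leq s\diam(W) \to 0\). The magnitude formula then yields
\[
\lim_{s \to 0} |X_s| \;=\; |X| + \lim_{s\to 0}|sW| - 1 \;=\; |X| + \tfrac{1}{5} \;\neq\; |X|,
\]
so taking any sequence \(s_k \to 0\) produces \(X_{s_k} \to X\) in \(\FMetmag\) with \(|X_{s_k}| \not\to |X|\). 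As \(X\) was arbitrary, this establishes discontinuity everywhere. The argument degenerates gracefully when \(X\) is the one-point space, since then \(X_s = sW\) and \(|X_s| \to \sfrac{6}{5} \neq 1\).

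The only genuine subtlety I anticipate is confirming that \(X_s\) really does have a weighting for a sequence of \(s\) tending to \(0\)---that is, that it lies in \(\FMetmag\) rather than merely being assigned a formal value by the magnitude formulae. This reduces entirely to checking the hypotheses of \Cref{lem:wedgesum_join}, which hold because both \(X\) and \(sW\) have magnitude for small \(s\). With that in hand, the Gromov--Hausdorff estimate and the final computation are routine.
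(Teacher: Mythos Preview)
Your proposal is correct and follows essentially the same argument as the paper: wedge \(X\) with a shrinking copy of \(K_3 +_1 2K_3\), use the additivity of magnitude under wedges (\Cref{lem:wedgesum_join}\eqref{lem:wedgesum}) together with the failure of the one-point property from \Cref{ex:willerton}, and bound the Gromov--Hausdorff distance by the diameter of the attached piece. The paper's proof is the same computation with the same example, differing only in notation.
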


\begin{proof}
    Take any $X\in\FMetmag$. Let $Y$ be a space without 
    the one-point property---for concreteness, we may choose $Y=K_3+_1 2K_3$ as in Example \ref{ex:willerton} (1). 
    Pick any points $x\in X$ and $y\in Y$, take \(t > 0\) and consider the wedge $X \vee tY = X\vee_{x, y} tY$. 
    Observe that $X$ embeds isometrically into $X\vee tY$, and that 
    in the Hausdorff metric on subspaces of the wedge, we have $d(X, X \vee tY ) \leq \diam(tY)\to 0$ 
    as $t\to 0$. It follows that $X\vee tY\to X$ in $\FMet$. 
    Meanwhile, for sufficiently small $t > 0$ the space $tY$ has magnitude \cite[Rmk.~2.1]{roff2023small}, so, by 
    \Cref{lem:wedgesum_join} \eqref{lem:wedgesum}, for sufficiently small $t > 0$ the space $X\vee tY$ has magnitude given by 
    $|X\vee tY|=|X|+|tY|-1$. Hence, as $t \to 0$, we have
    \[
    |X\vee tY|\to |X|+\lim_{t\to 0}|tY| - 1=|X|+\frac{1}{5}\neq |X|,
    \]
    which tells us that magnitude is not continuous at $X$. 
\end{proof}

In \cite[Thm.~3.8]{roff2023small}, it was shown that magnitude's failure of continuity at the one-point space can be `arbitrarily bad', in the sense that for each real number \(\alpha \geq 1\) there exists a finite metric space \(X\) with \(\lim_{t \to 0} |tX| = \alpha\). So far as we know, no example has yet been found of a finite metric space \(X\) such that \(\lim_{t \to 0} |tX| < 1\) or \(\lim_{t \to 0} |tX| = +\infty\). However, if we consider more general paths in \(\mathrm{FMet}\) converging to the one-point space, the limiting magnitude can attain any value in the extended reals. A modification of Example \ref{ex:willerton} realizes such paths.

\begin{example}
\label{ex:perturb}
Fix $\alpha\in[0, \infty)$. For sufficiently small $t > 0$, both 
\[
\left(t+\alpha t^2\right)K_3+_t 2t K_3 \ \ \ \text{ and } \ \ \
\left(\frac{4}{3}t+\alpha t^2\right)K_2+_t 2tK_3
\]
become metric spaces, and, as $t\to 0$, both these families converge to the one-point space. Using \Cref{lem:wedgesum_join} \eqref{lem:join} along with the expression \eqref{eq:Kn_mag} for \(|tK_n|\), we find that
\[
\lim_{t\to 0}\left|\left(t+\alpha t^2\right)K_3+_t 2tK_3\right|=\frac{6\alpha-6}{6\alpha-5}
\]
and
\[
\lim_{t\to 0}\left|\left(\frac{4}{3}t+\alpha t^2\right)K_2+_t 2tK_3\right|=\frac{9\alpha-14}{9\alpha-12}.
\]
By varying \(\alpha\), both limits can achieve arbitrary values in $\mathbb{R}\cup\{\infty\}$, except for 1.
\end{example}

\begin{figure}
\includegraphics[width=.5\textwidth]{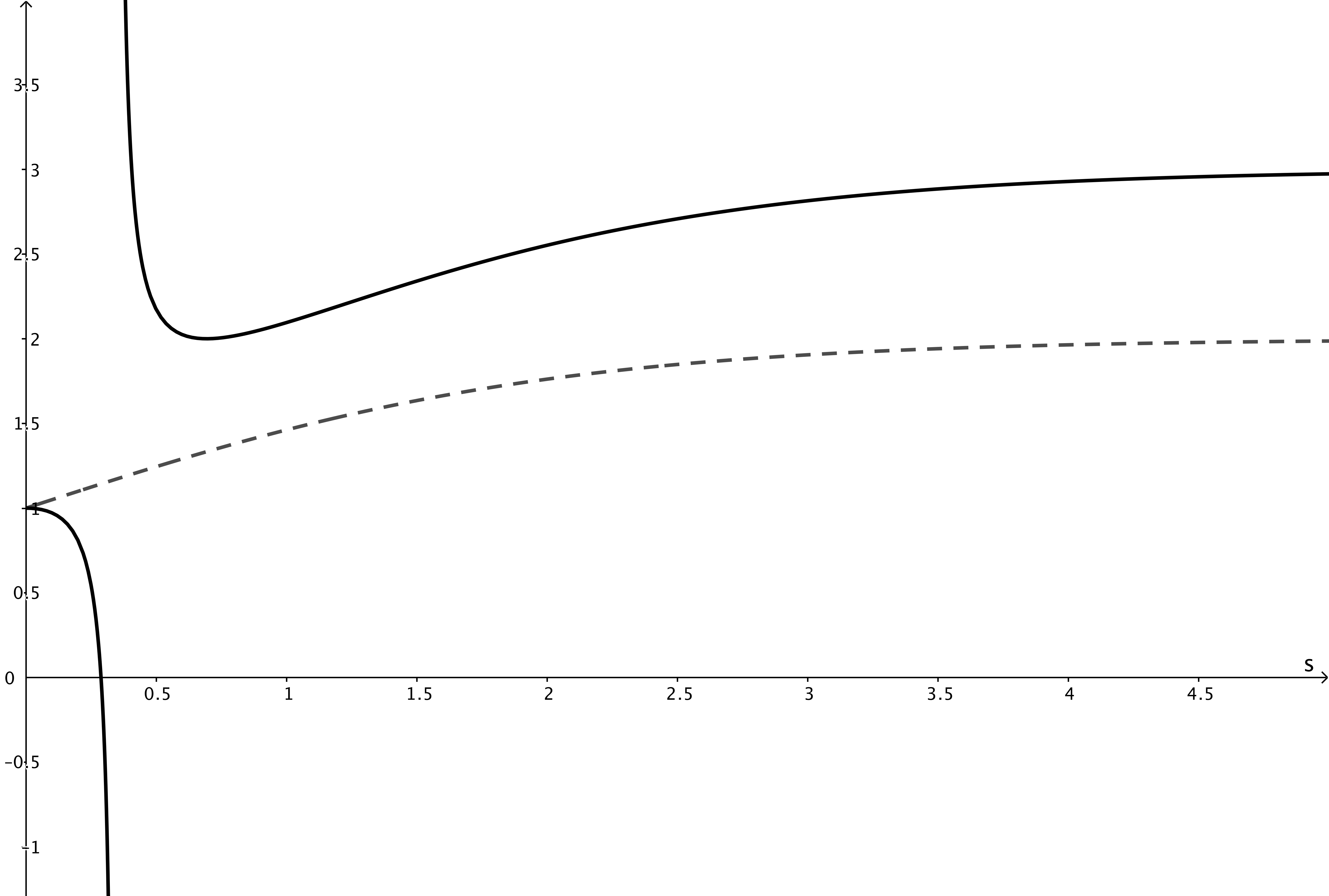}
\caption{The dashed graph shows the magnitude function of \(K_2\). The solid graph shows the pointwise limit, as \(t \to 0\), of the magnitude functions of the spaces \(X_t = \{\ast\} +_1 tY_{\alpha}\), for \(\alpha = 2\).}
\label{fig:K2_pw}
\end{figure}

Given a path \((X_t)_{0 < t \ll 1}\) in Gromov--Hausdorff space, we have so far discussed the behaviour, as \(t \to 0\), of the associated family of magnitudes \((|X_t|)_{0 < t \ll 1}\). One might wish to consider instead the behaviour of the associated family of {magnitude functions}, \((s \mapsto |sX_t|)_{0 < t \ll 1}\). In general this family of functions will not converge uniformly as \(t \to 0\), but we can always consider the pointwise limit and compare it to the magnitude function of the limiting space.

Take, for instance, the metric space $K_2$ with two points \(a,b\) and $d(a,b)=1$, whose magnitude function is given by \[|t K_2|=\frac{2}{1+e^{-t}}.\]
This example displays all the archetypal good behaviours of the magnitude function: it is smooth and monotonically increasing with \(t \in (0,\infty)\); we have \(\lim_{t \to 0}|tK_2| = 1\) and \(\lim_{t \to \infty}|tK_2| = 2\). It is the simplest example illustrating the idea that magnitude counts the `effective number of points' in a space at different scales: when the scale factor \(t\) is very small, \(K_2\) is `effectively' a one-point space; as \(t\) increases, its points gradually separate, until at last it is plainly a two-point space. 

However, given a path in $\FMet$ converging to \(K_2\), the pointwise limit of the associated magnitude functions need not have {any} of these good properties, as the following theorem shows.

\begin{theorem}
\label{thm:2points_fnc}
Given a path $(X_t)_{0 < t \ll 1}$ in \(\FMet\), let \(f^X \colon (0,\infty) \to \mathbb{R}\) denote the pointwise limit of their magnitude functions: \(f^X(s) = \lim_{t \to 0}|sX_t|\). Then:
\begin{enumerate}
\item For every \(\ell \in (1, \infty)\) there exists a path $(X_t)_{0 < t \ll 1}$ in $\FMet$ such that \(\lim_{t \to 0} X_t = K_2\), but \(\lim_{s \to 0} f^X(s) = \ell\). \label{2points_fnc_1}
\item For every \(L \in (2, \infty)\) there exists a path $(X_t)_{0 < t \ll 1}$ in $\FMet$ such that \(\lim_{t \to 0} X_t = K_2\), but \(\lim_{s \to \infty} f^X(s) = L\). \label{2points_fnc_2}
\item For every \(S \in (0,\infty)\) there exists a path $(X_t)_{0 < t \ll 1}$ in $\FMet$ such that \(\lim_{t \to 0} X_t = K_2\), but the function \(f^X\) has a singularity at \(S\). \label{2points_fnc_3}
\end{enumerate}
\end{theorem}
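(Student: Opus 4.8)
The plan is to realise all three pathologies inside one flexible family of paths: spaces of the form $X_t = A_t +_1 B_t$, the distance-$1$ join of two pieces $A_t,B_t$ whose diameters shrink to $0$ as $t\to 0$. Since $\diam(A_t),\diam(B_t)\to 0$ while the pieces sit at distance $1$ from one another, any such path converges in $\FMet$ to two points at distance $1$, i.e.\ to $K_2$. The virtue of this design is that the magnitude is completely controlled by \Cref{lem:wedgesum_join}\eqref{lem:join}: writing $sX_t = sA_t +_s sB_t$ and letting $t\to 0$ with $s$ fixed, continuity of the rational join formula in its arguments yields
\[
f^X(s) = \lim_{t\to 0}|sX_t| = \frac{a(s)+b(s)-2e^{-s}a(s)b(s)}{1-e^{-2s}a(s)b(s)}, \qquad a(s) := \lim_{t\to 0}|sA_t|, \quad b(s) := \lim_{t\to 0}|sB_t|
\]
wherever the limiting denominator is nonzero (its vanishing is exactly what will produce the singularities of part~\eqref{2points_fnc_3}). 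So the whole problem reduces to choosing shrinking pieces whose rescaled magnitudes $a(s),b(s)$ have prescribed limits at $s\to 0$ and $s\to\infty$.

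The cleanest pieces are fixed `bad' spaces scaled down. If $Y$ is finite with $\lim_{u\to 0}|uY| = c$, then $A_t = tY$ gives the \emph{constant} function $a(s)\equiv c$, since $\lim_{t\to 0}|s\cdot tY| = \lim_{u\to 0}|uY| = c$ independently of $s$; by \cite[Thm.~3.8]{roff2023small} such $Y$ exists for every $c\ge 1$. First I would dispatch parts~\eqref{2points_fnc_2} and~\eqref{2points_fnc_3} with the one-sided family $X_t = \{\ast\} +_1 tY$ of \Cref{fig:K2_pw}, for which $a\equiv 1$, $b\equiv c$, so that
\[
f^X(s) = \frac{1+c-2c\,e^{-s}}{1-c\,e^{-2s}}.
\]
Reading off limits gives $\lim_{s\to\infty}f^X(s) = 1+c$ and a pole exactly where $c\,e^{-2s}=1$, i.e.\ at $s=\tfrac12\ln c$ (there the numerator equals $(\sqrt c-1)^2>0$, so the pole is genuine). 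Taking $c=L-1$ realises any $L\in(2,\infty)$ for part~\eqref{2points_fnc_2}, and $c=e^{2S}$ places the singularity at any $S\in(0,\infty)$ for part~\eqref{2points_fnc_3}.

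For part~\eqref{2points_fnc_1} the relevant quantity is $\lim_{s\to 0}f^X(s)$. When both rescaled magnitudes tend to constants $c_1,c_2$ as $s\to 0$, the formula collapses to $\lim_{s\to 0}f^X(s) = \frac{c_1+c_2-2c_1c_2}{1-c_1c_2}$; note that a single nontrivial piece joined to a point always gives the value $1$ (that is $g(1,c)=1$), so both pieces must be nontrivial. Taking $c_1=c_2=c$ this equals $\tfrac{2c}{1+c}$, which sweeps out $(1,2)$ bijectively as $c$ ranges over $(1,\infty)$, so every $\ell\in(1,2)$ is achieved by two scaled copies of bad spaces with $\lim_{u\to 0}|uY_i| = \tfrac{\ell}{2-\ell}$.

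The hard part is the range $\ell\in[2,\infty)$, and this will be the crux of the argument. The expression $\frac{c_1+c_2-2c_1c_2}{1-c_1c_2}$ can reach or exceed $2$ only when $c_1c_2<1$, which forces at least one piece to have limiting rescaled magnitude \emph{strictly below} $1$. No fixed finite space is known to satisfy $\lim_{u\to 0}|uX|<1$, so this cannot be arranged by scaling a fixed space; instead I would build the offending piece as a genuine shrinking path, adapting the perturbed joins of complete graphs in \Cref{ex:perturb}, which are the one available source of magnitude values in $(0,1)$. The delicate point is to make this sub-unital behaviour survive the iterated limit $s\to 0$ of $\lim_{t\to 0}|sA_t|$, rather than being rescaled back up to $1$. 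Once a piece with $a(0^+)=c_1<1$ is secured, I would pair it with a super-unital piece (e.g.\ a balanced join of complete graphs giving $c_2=2$) and solve $\frac{c_1+c_2-2c_1c_2}{1-c_1c_2}=\ell$ for $c_1\in(0,1)$ to realise every remaining $\ell$.
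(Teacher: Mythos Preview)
Your treatment of parts \eqref{2points_fnc_2} and \eqref{2points_fnc_3} is correct and matches the paper for \eqref{2points_fnc_3}; for \eqref{2points_fnc_2} the paper instead uses the wedge $K_2\vee tY_\alpha$, but your join argument is equally valid.

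The real problem is part \eqref{2points_fnc_1} for $\ell\in[2,\infty)$. You correctly observe that with two scaled bad spaces $tY_1,tY_2$ (so $a,b$ constant with $c_1,c_2\ge 1$) the expression $\dfrac{c_1+c_2-2c_1c_2}{1-c_1c_2}$ is trapped in $(1,2)$, and you propose to escape by building a shrinking piece $A_t$ with $a(0^+)<1$ out of the perturbed spaces of \Cref{ex:perturb}. But this does not work, and you already half-sense why. Take $A_t=(t+\alpha t^2)K_3+_t 2tK_3$; then $sA_t=(st+\alpha st^2)K_3+_{st}2stK_3$, and substituting $u=st$ shows this is exactly the family of \Cref{ex:perturb} with parameter $\alpha/s$. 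Hence
\[
a(s)=\lim_{t\to 0}|sA_t|=\frac{6(\alpha/s)-6}{6(\alpha/s)-5}=\frac{6\alpha-6s}{6\alpha-5s}\xrightarrow{\,s\to 0\,}1,
\]
so the sub-unital value is rescaled away, as you feared. Reparametrising $t$ does not help: any such substitution just replaces $\alpha$ by $\alpha/s$ again. More generally, if $a(s),b(s)\to 1$ linearly as $s\to 0$, a first-order expansion of your join formula shows $f^X(s)\to 1$ regardless of the slopes; and if one piece has $a(0^+)=c_1\ne 1$ while $b(0^+)=1$, the limit is again $1$. So the join-only route genuinely stalls at $\ell=2$, and your outline does not supply the missing piece.

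The paper avoids this entirely by switching constructions for \eqref{2points_fnc_1} and \eqref{2points_fnc_2}: take $W_t=K_2\vee_{b,y} tY_\alpha$ with $\lim_{u\to 0}|uY_\alpha|=\alpha$. Then $sW_t=sK_2\vee stY_\alpha$, and \Cref{lem:wedgesum_join}\eqref{lem:wedgesum} gives
\[
f^W(s)=|sK_2|+\alpha-1,
\]
so $\lim_{s\to 0}f^W(s)=\alpha$ and $\lim_{s\to\infty}f^W(s)=\alpha+1$. Since $\alpha$ ranges over $[1,\infty)$, this covers every $\ell\in(1,\infty)$ (and every $L\in(2,\infty)$) at once with no analysis of iterated limits. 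The moral: the wedge formula is additive and decouples the $s$-dependence from the bad piece, whereas the join formula entangles them through $e^{-s}$ and makes the $s\to 0$ limit collapse to the trivial value.
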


\begin{proof}
By \cite[Thm.~3.8]{roff2023small}, for every real number $\alpha \geq 1$ there exists a finite metric space $Y_\alpha$ such that $\lim_{t\to 0}|tY_\alpha|=\alpha$. Fix \(\alpha\) and let \(W_t = K_2 \vee t Y_\alpha\); then \(\lim_{t \to 0} W_t = K_2\). For each \(s > 0\), we have \(sW_t = sK_2 \vee st Y_\alpha\). It follows, by \Cref{lem:wedgesum_join} \eqref{lem:wedgesum}, that
\[f^W(s) = \lim_{t \to 0} |sW_t| = |sK_2| + \lim_{t \to 0} |st Y_\alpha| - 1 = |sK_2| + \alpha - 1,\]
from which we see that \(\lim_{s \to 0} f^W(s) = \alpha\) and \(\lim_{s \to \infty} f^W(s) = \alpha + 1\). Since \(\alpha\) can be chosen arbitrarily from \([1, \infty)\), this proves \eqref{2points_fnc_1} and \eqref{2points_fnc_2}.

Now, denote the one-point metric space by \(\{\ast\}\), and let $X_t= \{\ast\} +_1 (tY_\alpha)$. For sufficiently small \(t\) this join is a metric space, and \(\lim_{t\to 0} X_t =  K_2\). For each \(s > 0\), we have \(sX_t = \{\ast\} +_s (stY_\alpha)\). It follows, by \Cref{lem:wedgesum_join} \eqref{lem:join}, that
    \[
    f^X(s) = \lim_{t\to 0}| s X_t| = \frac{1+\alpha - 2e^{-s}\alpha}{1-e^{-2s}\alpha}, 
    \]
provided the denominator and the numerator are not both zero. Choosing \(\alpha = e^{2S}\) ensures that the function \(f^X\) has a singularity at \(S\), proving \eqref{2points_fnc_3}. (The case \(\alpha=2\) is shown in \Cref{fig:K2_pw}.)
\end{proof}


\section{Lines in Gromov--Hausdorff space}
\label{sec:lines}

In the remainder of the paper, we turn our attention to the second formulation of generic continuity:
\begin{formulationB}
    Let \(A\) and \(B\) be metric spaces. A function \(f \colon A \to B\) is generically continuous if, for a generic convergent sequence \((X_k)_{k \in \mathbb{N}}\) in \(A\), we have 
    \[\lim_{k \to \infty} f(X_k) = f\left(\lim_{k \to \infty} X_k \right).\]
\end{formulationB}
We cannot yet say definitively whether or not magnitude is generically continuous in this sense. Instead, we will restrict attention to a particular class of paths in Gromov--Hausdorff space---those that can be thought of as straight lines---and we will prove that for a generic line \((Y_t)_{0 < t \leq 1}\) we have
\begin{equation}
\label{eq:mag_pres}
    \lim_{t \to 0} |Y_t| = |\lim_{t \to 0} Y_t|.
\end{equation}
To make this precise, in this section we define what it means to be a line in Gromov--Hausdorff space. We show that, just as each finite metric space can be described by an equivalence class of matrices (its `distance matrix'), each line can be described by an equivalence class of block matrices. That description allows us to equip the set of Gromov--Hausdorff lines with a topology. In the next section we will show that for a generic space \(X\), the space of all lines converging to \(X\) contains a dense open subset on which property \eqref{eq:mag_pres} holds.

\begin{definition}
\label{def:GH_lines}
    Let \((X,d_X)\) and \((Y,d_Y)\) be finite metric spaces, and let \(f\colon Y \surj X\) be a surjective function. The \demph{line from \(Y\) to \(X\) along \(f\)} is the family of metric spaces \(\{(Y, d_{f,t}) \mid 0 < t \leq 1\}\) defined by
    \[d_{f,t}(y,y') = td_Y(y,y') + (1-t)d_X(f(y),f(y')).\]
    By construction we have \((Y, d_{f,1}) = (Y, d_Y)\), and
    \[\lim_{t \to 0} (Y, d_{f,t}) = (X,d_X)\]
    in the Gromov--Hausdorff topology. We will often denote \((Y, d_{f,t})\) by \(Y_{f,t}\). 
\end{definition}

\begin{figure}[htbp]
\centering
\begin{tikzpicture}

\coordinate (Y11) at (-1,3); 
\coordinate (Y12) at (0,3.5); 
\coordinate (Y13) at (0.5,2.8); 

\coordinate (Y21) at (1.4,3); 

\coordinate (Y31) at (2,3.7); 
\coordinate (Y32) at (4.2,3); 

\coordinate (Z1) at (0,0); 
\coordinate (Z2) at (2,0); 
\coordinate (Z3) at (4,0);

\coordinate (X11) at (-0.5,1.5); 
\coordinate (X12) at (0,1.75); 
\coordinate (X13) at (0.25,1.4); 

\coordinate (X21) at (1.7,1.5); 

\coordinate (X31) at (3,1.85); 
\coordinate (X32) at (4.1,1.5);

\filldraw[fill=black, draw=black] (Y11) circle[radius=0.05] node[above] {$y_i^1$};
\filldraw[fill=black, draw=black] (Y12) circle[radius=0.05] node[right] {$y_i^2$};
\filldraw[fill=black, draw=black] (Y13) circle[radius=0.05] node[anchor=south west] {$y_i^3$};
\filldraw[fill=black, draw=black] (Y21) circle[radius=0.05];
\filldraw[fill=black, draw=black] (Y31) circle[radius=0.05];
\filldraw[fill=black, draw=black] (Y32) circle[radius=0.05];
\filldraw[fill=black, draw=black] (Z1) circle[radius=0.05] node[right] {$x_i$};
\filldraw[fill=black, draw=black] (Z2) circle[radius=0.05];
\filldraw[fill=black, draw=black] (Z3) circle[radius=0.05];

\filldraw[fill=black, draw=black] (X11) circle[radius=0.05];
\filldraw[fill=black, draw=black] (X12) circle[radius=0.05];
\filldraw[fill=black, draw=black] (X13) circle[radius=0.05];
\filldraw[fill=black, draw=black] (X21) circle[radius=0.05];
\filldraw[fill=black, draw=black] (X31) circle[radius=0.05];
\filldraw[fill=black, draw=black] (X32) circle[radius=0.05];

\draw[ultra thin, gray] (Y11)--(Z1);
\draw[ultra thin, gray] (Y12)--(Z1);
\draw[ultra thin, gray] (Y13)--(Z1);
\draw[ultra thin, gray] (Y21)--(Z2);
\draw[ultra thin, gray] (Y31)--(Z3);
\draw[ultra thin, gray] (Y32)--(Z3);

\draw[densely dashed] (2,0) circle [x radius=3,y radius=0.3];
\draw (-1,0) node[left] {$X$};

\draw[densely dashed] (2,1.6) circle [x radius=3.5,y radius=0.4];
\draw (-1.5,1.6) node[left] {$Y_t$};

\draw[densely dashed] (2,3.24) circle [x radius=4,y radius=0.7];
\draw (-2,3.24) node[left] {$Y=Y_1$};

\end{tikzpicture}
\caption{A line in Gromov--Hausdorff space from \(Y\) to \(X\).}
\label{fig:linGH}
\end{figure}
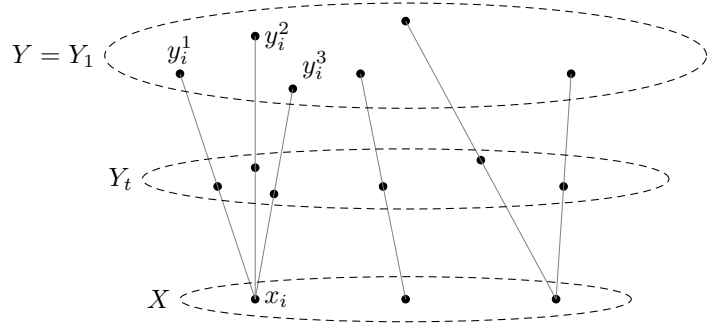

\begin{example}
    There is a unique line from each \(Y \in \mathrm{FMet}\) to the one-point space: it is the family of scaled spaces \((tY)_{0 < t \leq 1}\) considered in the study of the one-point property (and throughout the study of magnitude).
\end{example}

\begin{example}
    The family of spaces \((X \vee_{x,y} tY)_{0 < t \leq 1}\) constructed in the proof of \Cref{thm:nowhere} is the line from \(X \vee_{x,y} Y\) to \(X\) along the function \(f\) specified by
    \[f(u) = \begin{cases}
        u & u \in X \\
        x \sim y & u \in Y.
    \end{cases}\]
    The paths to \(K_2\) described in the proof of \Cref{thm:2points_fnc} are also lines. The paths constructed in \Cref{ex:perturb} each converge to the one-point space, but they are not lines.
\end{example}

Since isometric spaces represent the same point in Gromov--Hausdorff space, two lines \((Y_{f,t})\) and \((W_{g,t})\) in \(\FMet\) are indistinguishable if there are isometries \(Y_{f,t} \simeq W_{g,t}\) for each \(t\). The next lemma, specifying when two surjective functions represent the same line, follows immediately from \Cref{def:GH_lines}.

\begin{lemma}
\label{lem:same_line}
    Let \(X\) and \(Y\) be finite metric spaces, and let \(f,g \colon Y \surj X\) be surjective functions. Suppose there exist isometries \(h \colon Y \xrightarrow{\sim} Y\) and \(k \colon X \xrightarrow{\sim} X\) such that \(k \circ f = g \circ h\). Then \(h\) gives an isometry \(Y_{f,t} \xrightarrow{\sim} Y_{g,t}\) for every \(t\). \qed
\end{lemma}

In light of the lemma, we make the following definition.

\begin{definition}
\label{def:set_of_lines}
    Let \(X\) be a finite metric space. The 
    \demph{set of lines to \(X\)} 
    is the set
    \[\lines{}{X} = \left\{(Y_{f,t})_{0 < t \leq 1} \mid f \colon Y \surj X \text{ is a surjective function}\right\} / \sim \]%
    where \((Y_{f,t}) \sim (Y_{g,t})\)
    if there exist isometries \(h \colon Y \xrightarrow{\sim} Y\) and \(k \colon X \xrightarrow{\sim} X\) such that \(k \circ f = g \circ h\).
\end{definition}

In order to make a statement about the generic behaviour of magnitude with respect to lines, we need to equip the set of lines with a topology. We will topologise \(\lines{}{X}\) by relating it to a certain space of matrices.

Recall that every finite metric space \(X\), once we label its points as \(x_1,\ldots,x_m\), is determined by its \demph{distance matrix} 
\[D_X =  (d_X(x_i,x_j))_{i,j=1}^m.\]
Thus, ordered \(m\)-point metric spaces are parametrized by the set
\[
\label{eq:matrix}
\Met_m=
\left\{
(d_{ij})_{i, j=1, \dots, m}
\left|\ 
\begin{aligned}
    &d_{ii}=0\ (1\leq i\leq m)\\
&d_{ij}=d_{ji}>0\ (1\leq i<j\leq m)\\
&d_{ij}+d_{jk}\geq d_{ik}\ (1\leq i, j, k\leq m)
\end{aligned}
\right.
\right\},
\]
which is a $\binom{m}{2}$-dimensional convex cone in Euclidean space. The set of (unordered) \(m\)-point metric spaces is parametrized by \(\mathrm{Met}_m / \mathfrak{S}_m\), where the action of the group \(\mathfrak{S}_m\) permutes the rows and columns of each matrix in \(\mathrm{Met}_m\).

In \cite[\S 2]{roff2023small}, the small-scale limit of the magnitude function was studied via this identification of finite metric spaces with equivalence classes of matrices. To adapt the techniques of that paper to the present setting, we will identify lines in Gromov--Hausdorff space with certain equivalence classes of block matrices.

\begin{proposition}
\label{prop:space_of_lines}
    Let \(X\) be a finite metric space. Label its points as \(x_1,\ldots, x_m\), and let \(\mathrm{Iso}(X)\) denote the group of isometries of \(X\), regarded as a subgroup of the permutation group \(\mathfrak{S}_m\).
    There is a bijection
    \begin{equation}
    \label{eq:lines_biject}
        \lines{}{X} 
        \xrightarrow{\sim}
        \left(\bigsqcup_{\substack{(r_1,\ldots, r_m) \\ \sum r_i = n \\ n, r_1 \ldots, r_m \geq 1}} \mathrm{Met}_n/(\mathfrak{S}_{r_1} \times \cdots \times \mathfrak{S}_{r_m})\right) / \mathrm{Iso}(X).
    \end{equation}
    Here, each tuple \((r_1, \ldots, r_m)\) partitions the rows and columns of each matrix \(D \in \mathrm{Met}_n\). The action of \(\mathfrak{S}_{r_i}\) permutes the rows and columns within the \(i^\th\) section of the partition, and the action of \(\mathrm{Iso}(X)\) permutes the sections of the partition.
\end{proposition}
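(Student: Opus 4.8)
The plan is to construct the bijection \eqref{eq:lines_biject} explicitly and then verify it is well defined, injective and surjective. Given a line represented by a surjection $f \colon Y \surj X$, the natural idea is to read off two pieces of data: the \emph{fiber sizes} $r_i = \#f^{-1}(x_i)$, which are all at least $1$ by surjectivity and sum to $n := \#Y$, and the metric on $Y$ itself. To encode the latter as a matrix, I would order the points of $Y$ compatibly with the fibers --- listing first the $r_1$ points of $f^{-1}(x_1)$, then the $r_2$ points of $f^{-1}(x_2)$, and so on --- so that $Y$ is recorded by a distance matrix $D \in \mathrm{Met}_n$ whose rows and columns are partitioned into consecutive blocks of sizes $(r_1,\ldots,r_m)$. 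Sending the line to the class of $D$ in the summand indexed by $(r_1,\ldots,r_m)$ defines the forward map; the inverse sends a matrix $D \in \mathrm{Met}_n$ with block structure $(r_1,\ldots,r_m)$ to the space $Y$ with distance matrix $D$, together with the surjection $f$ collapsing the $i^\th$ block to $x_i$. Note that for any such $f$ the interpolated distance $d_{f,t}$ is a genuine metric for $t \in (0,1]$, being the sum of the metric $t\,d_Y$ and the pseudometric $(1-t)\,d_X\circ(f\times f)$, so every block matrix really does arise from a line; this gives surjectivity essentially for free.

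The substance of the argument is well-definedness, where there are exactly two sources of ambiguity to account for. First, the ordering of points \emph{within} a single fiber is arbitrary; reordering the $i^\th$ block permutes the corresponding rows and columns of $D$ by an element of $\mathfrak{S}_{r_i}$, which is precisely why we quotient each $\mathrm{Met}_n$ by $\mathfrak{S}_{r_1} \times \cdots \times \mathfrak{S}_{r_m}$. Second, and more delicately, the label $x_i$ attached to each fiber is only canonical up to the symmetries of $X$. If $k \in \mathrm{Iso}(X)$ relabels the points of $X$, then the fiber over $x_i$ becomes the fiber over $k(x_i)$, so the tuple $(r_1,\ldots,r_m)$ is permuted --- moving us between summands of the disjoint union --- and the blocks of $D$ are permuted to match. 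I would check that this prescription defines a genuine action of $\mathrm{Iso}(X)$ on the whole disjoint union $\bigsqcup_{(r_1,\ldots,r_m)} \mathrm{Met}_n/(\mathfrak{S}_{r_1}\times\cdots\times\mathfrak{S}_{r_m})$, compatible with the within-block quotients, and that quotienting by it yields exactly the right-hand side of \eqref{eq:lines_biject}.

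Finally, injectivity is where these two quotients must be shown to capture the equivalence relation $\sim$ of \Cref{def:set_of_lines} on the nose. Unwinding the definitions, two labeled pairs $(Y,f)$ and $(Y,g)$ satisfy $k\circ f = g\circ h$ for isometries $h\colon Y \xrightarrow{\sim} Y$ and $k \colon X \xrightarrow{\sim} X$ exactly when $h$ carries the fiber $f^{-1}(x_i)$ isometrically onto the fiber $g^{-1}(k(x_i))$; translating $h$ into a relabeling of the rows and columns of the distance matrix, this is precisely a within-block permutation (from $\prod_i \mathfrak{S}_{r_i}$) composed with the block permutation induced by $k \in \mathrm{Iso}(X)$. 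Conversely, any matrix equality produced by these group actions reconstructs the required isometries $h$ and $k$. I expect the main obstacle to be exactly this two-level bookkeeping: keeping straight which relabelings of $Y$ are forced to stay inside a fiber (the $\mathfrak{S}_{r_i}$ factors) versus which are allowed to move between fibers (and only in the manner dictated by $\mathrm{Iso}(X)$), and confirming that the composite action on the disjoint union is well defined despite its summands being indexed by the very tuples $(r_1,\ldots,r_m)$ that $\mathrm{Iso}(X)$ permutes.
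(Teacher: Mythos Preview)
Your proposal is correct and follows essentially the same approach as the paper: construct the map by recording fiber sizes and a block-partitioned distance matrix, then verify that the equivalence relation of \Cref{def:set_of_lines} corresponds exactly to the combined action of the within-block symmetric groups and $\mathrm{Iso}(X)$. Your presentation is slightly more explicit about the inverse map and about why $d_{f,t}$ is a genuine metric, while the paper phrases the quotient-matching step as a single computation decomposing the isometry $h$ as $\sigma(y_i^\alpha)=y_{\rho(i)}^{\eta_i(\alpha)}$; but these are organizational differences, not mathematical ones.
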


\begin{proof}
    First, let \(Y = (\{y_1,\ldots, y_n\}, d_Y)\), and consider a line from \(Y\) to \(X\) along some surjection \(f \colon \{y_1,\ldots, y_n\} \surj \{x_1,\ldots, x_m\}\). For each \(i \in \{1,\ldots, m\}\), let \(r_i = \# f^{-1}(x_i)\). Let the points of \(f^{-1}(x_i)\) inherit their order from \(Y\), but relabel them so that
    \(f^{-1}(x_i) = \{y_i^1,\ldots, y_i^{r_i}\}\).
    In this way, \(f\) is specified by the labelling
    \((y_1^1, \ldots y_1^{r_1}, \ldots, y_m^{1}, \ldots, y_m^{r_m})\)
    of the points of \(Y\), and hence by a block matrix  whose columns and rows are partitioned by the tuple \((r_1,\ldots, r_m)\).
    
    This defines a function 
    \begin{equation}
    \label{eq:space_of_lines0}
        \phi \colon \{f \colon (\{y_1,\ldots, y_n\} ,d_Y) \surj (\{x_1,\ldots, x_m\}, d_X)\} \to \bigsqcup_{\substack{(r_1,\ldots, r_m) \\ \sum r_i = n \\ n, r_1 \ldots, r_m \geq 1}} \mathrm{Met}_n
    \end{equation}
    which is obviously injective. To see that \(\phi\) is surjective, take any pair \((D, \mathbf{r})\) of a distance matrix \(D\) and \(\mathbf{r} = (r_1,\ldots, r_m)\), where \(r_i \geq 1\) satisfy \(\sum r_i = n\). Let 
    \[Y = (\{y_1^1,\ldots, y_1^{r_1}, \ldots, y_m^1, \ldots y_m^{r_m}\},d)\]
    be the metric space represented by \(D\) and specify \(f \colon Y \surj X\) by \(f(y_i^\alpha) = x_i\); then \(\phi(Y,f) = (D,\mathbf{r})\). This shows that \(\phi\) is a bijection; we need to see that it descends to a bijection after taking the appropriate quotient on each side of \eqref{eq:space_of_lines0}.
    
    By \Cref{def:set_of_lines}, surjections \(f, g \colon (\{y_1,\ldots, y_n\} ,d_Y) \surj (\{x_1,\ldots, x_m\}, d_X)\) represent the same line to \(X\) if and only if there exist \(\sigma \in \mathfrak{S}_n\) and \(\rho \in \mathfrak{S}_m\) such that the induced maps \(\sigma\colon Y \to Y\) and \(\rho \colon X \to X\) are both isometries---so, in particular,
    \begin{equation}
    \label{eq:space_of_lines1}
        d_Y(y_{\sigma(i)}, y_{\sigma(j)}) = d_Y(y_i,y_j)
    \end{equation}
    for all \(i,j \in \{1,\ldots, n\}\)---
    and we have 
    \begin{equation}
    \label{eq:space_of_lines2}
        \rho \circ f = g \circ \sigma.
    \end{equation}
    Condition \eqref{eq:space_of_lines1} says precisely that \(\sigma\) relates the matrix \(\phi(f)\) to \(\phi(g)\) by permuting its rows and columns.
    By \eqref{eq:space_of_lines2}, \(\sigma\) maps 
    \(\{y_i^1, \dots, y_i^{r_i}\}\) bijectively to 
    \(\{y_{\rho(i)}^1, \dots, y_{\rho(i)}^{r_{\rho(i)}}\}\). 
    Hence \(r_i=r_{\rho(i)}\), and there exists 
    \(\eta_i\in\mathfrak{S}_{r_i}\) such that 
    \(\sigma(y_i^\alpha)=y_{\rho(i)}^{\eta_i(\alpha)}\). 
    Therefore, \(f\) and \(g\) are related by 
    permutations \(\eta_i\in\mathfrak{S}_{r_i}\), 
    \(i=1, \dots, m\), and \(\rho\in\mathfrak{S}_m\). 
    This means that \(f\) and \(g\) are in the same 
    equivalence class of the right-hand side of 
    \eqref{eq:lines_biject}. The converse is obvious.
\end{proof}

We equip \(\mathrm{Met}_n\) with the Euclidean topology, and let this descend to the quotient on each component in the coproduct on the right of 
\eqref{eq:lines_biject}, then to the quotient by \(\mathrm{Iso}(X)\). The bijection in \eqref{eq:lines_biject} then makes \(\lines{}{X}\) into a topological space, which we will call \demph{the space of lines to \(X\)}.

\begin{remark}
Some readers may prefer to think of a line to \(X\) as representing a \demph{direction of approach} to \(X\): lines \((Y_t)\) and \((Y_t')\) approach `from the same direction' if \(Y_1' \simeq Y_t\) for some \(0<t\leq 1\), or vice versa. Our main result (\Cref{thm:main}) then says that for a generic finite metric space \(X\), and a linear sequence approaching \(X\) from a generic direction, magnitude behaves continuously. Thus, for continuity of magnitude, the bad directions are rare, and almost all directions are good.
\end{remark}

\section{Magnitude behaves continuously for generic limits along lines}
\label{sec:conti}

The goal in this section is to prove that for a generic line \((Y_{f,t})\) in Gromov--Hausdorff space, the limit as \(t \to 0\) is preserved by magnitude, meaning that
\[\lim_{t \to 0} |Y_{f,t}| = |\lim_{t \to 0} Y_{f,t}|.\]
Our main theorem runs as follows.

\begin{theorem}
\label{thm:main}
    For any finite metric space \(X\) such that \(Z_X\) is invertible, the space of lines to \(X\) contains a dense open subset whose elements satisfy \(\lim_{t \to 0} |Y_{f,t}| = |X|\).
\end{theorem}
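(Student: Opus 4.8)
The plan is to reduce the statement to a single genericity condition on an auxiliary matrix produced by first-order perturbation theory of the similarity matrix along the line. Fix an ordering of the fibres, writing the points of $Y$ as $y_i^\alpha$ with $f(y_i^\alpha)=x_i$ for $1\le i\le m$, $1\le\alpha\le r_i$, and set $n=\sum_i r_i$. Writing $M(t)=Z_{Y_{f,t}}$, the definition $d_{f,t}(y_i^\alpha,y_j^\beta)=t\,d_Y(y_i^\alpha,y_j^\beta)+(1-t)d_X(x_i,x_j)$ gives
\[
M(t)_{(i\alpha),(j\beta)} = Z_X(x_i,x_j)\,e^{-t\,d_Y(y_i^\alpha,y_j^\beta)} = Z_X(x_i,x_j)\bigl(1-t\,d_Y(y_i^\alpha,y_j^\beta)+O(t^2)\bigr),
\]
so $M(t)=Z_0+tM_1+O(t^2)$ with $Z_0=E\,Z_X\,E^{\mathsf T}$, where $E$ is the $n\times m$ block-inclusion matrix carrying $\mathbf 1_{r_i}$ in its $(i,i)$ block, and $M_1=-\bigl(Z_X(x_i,x_j)\,d_Y(y_i^\alpha,y_j^\beta)\bigr)$. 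For small $t>0$ the space $Y_{f,t}$ has magnitude $|Y_{f,t}|=\mathbf 1_n^{\mathsf T}M(t)^{-1}\mathbf 1_n$, and the whole difficulty is that $Z_0$ has rank only $m<n$, so one cannot substitute $t=0$: all the content lies in controlling the blow-up of $M(t)^{-1}$.

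To do this I would pass to a block-adapted orthonormal basis. Choose, for each block, an orthogonal $U_i$ whose first column is $r_i^{-1/2}\mathbf 1_{r_i}$, collect the remaining columns as $\tilde U_i$ (spanning the mean-zero subspace of block $i$), and set $U=\operatorname{diag}(U_1,\dots,U_m)$. In these coordinates the ``coarse'' directions (first coordinate of each block) span $\operatorname{im}E$ and the ``fine'' directions span $\ker E^{\mathsf T}$; since $Z_0=E Z_X E^{\mathsf T}$ is supported on the coarse part, $\tilde M(t):=U^{\mathsf T}M(t)U$ has the form
\[
\tilde M(t)=\begin{pmatrix} A(t) & t\,B(t)\\ t\,B(t)^{\mathsf T} & t\,C(t)\end{pmatrix},
\qquad A(0)=R^{1/2}Z_XR^{1/2},\quad C(0)=:C_0,
\]
with $R=\operatorname{diag}(r_1,\dots,r_m)$; here $A(0)$ is invertible precisely because $Z_X$ is, which is where the hypothesis enters. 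The vector $\mathbf 1_n=E\mathbf 1_m$ is coarse, with $U^{\mathsf T}\mathbf 1_n=(\mathbf s,0)$ and $\mathbf s=R^{1/2}\mathbf 1_m$. A Schur-complement computation gives the top-left block of $\tilde M(t)^{-1}$ as $\bigl(A(t)-t\,B(t)C(t)^{-1}B(t)^{\mathsf T}\bigr)^{-1}$ whenever $C(t)$ is invertible; thus if $C_0$ is invertible then $C(t)$ is invertible for small $t>0$, $\det M(t)\sim(\det A_0)(\det C_0)\,t^{\,n-m}\neq0$, and this block converges to $A_0^{-1}$. Hence
\[
\lim_{t\to0}|Y_{f,t}| = \mathbf s^{\mathsf T}A_0^{-1}\mathbf s = \mathbf 1_m^{\mathsf T}Z_X^{-1}\mathbf 1_m = |X|.
\]
This reduces the theorem to showing that $\{\det C_0\neq0\}$ is a dense open subset of $\lines{}{X}$.

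Openness is immediate, as $\det C_0$ is a polynomial in the entries of $D_Y$, and the condition is invariant under the actions of $\mathfrak S_{r_1}\times\cdots\times\mathfrak S_{r_m}$ and $\mathrm{Iso}(X)$ relabelling fibres and permuting blocks, so it descends to the quotient of \Cref{prop:space_of_lines}. For density it suffices, on each fibre-size component $\Met_n/(\cdots)$, to exhibit a single metric $d_Y$ with $\det C_0\neq0$: since $\Met_n$ is a full-dimensional cone, the nonvanishing locus of $\det C_0$ is then automatically dense and open there. I would take $d_Y$ to equip each fibre with an arbitrary internal metric $\delta^{(i)}$ and set every cross-fibre distance equal to a common constant $L$ (a genuine metric, indeed an interior point of $\Met_n$, once $L$ is large). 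Then each off-diagonal fine block of $C_0$ involves $\tilde U_i^{\mathsf T}\bigl(L\,\mathbf 1_{r_i}\mathbf 1_{r_j}^{\mathsf T}\bigr)\tilde U_j=0$ and vanishes, so $C_0$ is block-diagonal with $i$-th block $-\tilde U_i^{\mathsf T}\delta^{(i)}\tilde U_i$, the mean-zero compression of the single cluster $(\{y_i^\alpha\},\delta^{(i)})$. Nonsingularity of this compression is exactly the condition characterising the one-point property of that cluster, which holds for generic $\delta^{(i)}$ by \cite[Thm.~2.3]{roff2023small} (the $m=1$ case of the present reduction); choosing each $\delta^{(i)}$ accordingly makes $\det C_0\neq0$.

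The conceptual heart is the reduction in the first two paragraphs: recognising $Z_0=EZ_XE^{\mathsf T}$ and using the coarse/fine splitting to convert the apparently singular limit into a controlled asymptotic whose only obstruction is the scalar $\det C_0$. I expect the main obstacle to be making the density step fully rigorous---verifying that the constant-distance construction genuinely lands in the interior of $\Met_n$, that its fibre-wise compressions are nondegenerate (invoking the earlier one-point genericity fibrewise), and that this interior witness forces nonvanishing of $\det C_0$ on a dense open set that is invariant under the group actions and so descends to $\lines{}{X}$.
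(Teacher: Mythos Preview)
Your argument is correct in outline and takes a genuinely different route from the paper's. The paper expands $\det Z_{f,t}$ and $\sumofentry(\adj Z_{f,t})$ directly as power series in $t$, using multilinearity of the determinant column by column, and identifies the coefficient of $(-t)^{n-m}$ in each as $F_{X,\mathbf{r}}\cdot\det Z_X$ and $F_{X,\mathbf{r}}\cdot\sumofentry(\adj Z_X)$ respectively, via an elementary column-operations lemma (\Cref{lem:elem3}). Your coarse/fine change of basis and Schur complement accomplish the same thing more structurally: the polynomial $F_{X,\mathbf{r}}$ of the paper is, up to the nonzero factor $(-1)^{n-m}\prod_i r_i$, your $\det C_0$. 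The paper's approach is more explicit and entirely self-contained; yours explains conceptually why numerator and denominator acquire the \emph{same} extra factor, and why the quadratic form $\mathbf 1_n^{\mathsf T}M(t)^{-1}\mathbf 1_n$ collapses to $|X|$ without having to treat them separately.

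Two small points. First, your expansion of $M(t)$ contains a slip: from $d_{f,t}=td_Y+(1-t)d_X$ one gets $M(t)_{(i\alpha),(j\beta)} = Z_X(x_i,x_j)\,e^{-t(d_Y-d_X)}$, not $Z_X(x_i,x_j)\,e^{-t\,d_Y}$. This changes $M_1$, but not $C_0$, since the extra $d_X(x_i,x_j)$ term is constant across each block and is annihilated by the fine projections $\tilde U_i^{\mathsf T}(\cdot)\tilde U_j$; so the argument survives unchanged. Second, your density step is correct but needlessly indirect: rather than invoke the one-point genericity result of \cite{roff2023small} fibrewise, you can take each $\delta^{(i)}$ to be the unit complete graph $K_{r_i}$, for which $\tilde U_i^{\mathsf T}\delta^{(i)}\tilde U_i = \tilde U_i^{\mathsf T}(\mathbf 1\mathbf 1^{\mathsf T}-I)\tilde U_i = -I_{r_i-1}$ is manifestly nonsingular. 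This is exactly the witness the paper uses in its Proposition~\ref{prop:poly_nonzero}, and it removes any appearance of circularity.
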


The proof of \Cref{thm:main} depends on two propositions, whose proofs occupy most of the section. For ease of comprehension, we will first prove \Cref{thm:main} \emph{assuming} Propositions \ref{prop:poly_exists} and \ref{prop:poly_nonzero}, then proceed to the proofs of the propositions.

\begin{proof}[Proof of \Cref{thm:main}]
Let \(X\) be a metric space such that \(\det(Z_X) \neq 0\). Suppose \(\# X = m\). Propositions \ref{prop:poly_exists} and \ref{prop:poly_nonzero} tell us there exists, for each \(\mathbf{r} = (r_1, \ldots, r_m)\), a non-zero polynomial \(F_{X,\mathbf{r}}\) defined on \(\mathrm{Met}_n\) (where \(n=\sum r_i\)), whose non-vanishing at a matrix representing a line \((Y_{f,t})_{0 < t \leq 1}\) guarantees that \(\lim_{t \to 0} |Y_{f,t}| = |X|\). The complement of the zero set of \(F_{X,\mathbf{r}}\) is, therefore, a dense open subset of \(\mathrm{Met}_{n}\) on which this property holds. Since the quotient of \(\mathrm{Met}_n\) by the action of the group \(\mathfrak{S}_{r_1} \times \cdots \times \mathfrak{S}_{r_m}\) is an open map, the image of that subset is dense and open in the \(\mathbf{r}\)-component of the coproduct 
\[\bigsqcup_{\substack{(r_1,\ldots, r_m) \\ \sum r_i = n \\ n, r_1 \ldots, r_m \geq 1}} \mathrm{Met}_n/(\mathfrak{S}_{r_1} \times \cdots \times \mathfrak{S}_{r_m}).\]
And since the quotient of this space by the action of \(\mathrm{Iso}(X)\) is an open map, the union of the zero sets descends to a dense open subset of \(\lines{}{X}\).
\end{proof}

\begin{remark}
    If \(X\) is such that \(\det(Z_X) \neq 0\), then every line along a bijective function \(f \colon Y \xrightarrow{\sim} X\) satisfies \(\lim_{t \to 0} |Y_{f,t}| = |X|\). Indeed, in this case we have 
    \[Z_X = \lim_{t \to 0} Z_{Y_{f,t}}\]
    in \(\mathbb{R}^{m \times m}\), so the continuity of the determinant implies that \(Z_{Y_{f,t}}\) is invertible for sufficiently small \(t\), and that
    \(Z_X^{-1} = \lim_{t \to 0} Z_{Y_{f,t}}^{-1}\).
    Thus, we can use the expression \eqref{eq:mag_def} for magnitude to see immediately that \(|X| = \lim_{t \to 0} |Y_{f,t}|\).
    
    As we will see, the technicalities become much more delicate when we consider lines along surjections \(f \colon Y \surj X\) which are not injective---that is, linear limits in which distinct points become united.
\end{remark}

To establish the existence of the polynomial \(F_{X,\mathbf{r}}\), we will make use of a few familiar facts of linear algebra, which we gather here for convenience. Recall that an \demph{elementary column operation} on a matrix \(A\) is the operation of adding a multiple of one column to another. Two matrices $A$ and $B$ (of the same dimensions) are said to be \demph{column-equivalent} if $B$ can be obtained from $A$ by applying finitely many elementary column operations. In this case we write $A\sim B$. This notion is motivated by the following fundamental facts.

\begin{lemma}
\label{lem:elem1}
    Let $A$ be an $n\times n$ matrix. 
    \begin{enumerate}
        \item If \(B\) is an \(n \times n\) matrix such that \(A \sim B\), then \(\det(A) = \det(B)\). \label{lem:elem1_pt3}
        \item If $\det(A)\neq 0$, then $A$ is column-equivalent to a diagonal matrix with diagonal entries \(\alpha_1, \ldots, \alpha_n\), and $\det(A)=\prod_{i=1}^n\alpha_i$.  \label{lem:elem1_pt1}
        \item If $\det(A)=0$, $A$ is column-equivalent to a matrix with a zero column. \label{lem:elem1_pt2}
    \end{enumerate}
\end{lemma}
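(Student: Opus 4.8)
The plan is to reduce all three parts to a single observation: an elementary column operation is realised by right-multiplication with a transvection matrix. Writing \(T_{ij}(\lambda) = I + \lambda e_{ij}\) for \(i \neq j\) (where \(e_{ij}\) has a single \(1\) in position \((i,j)\)), the operation of adding \(\lambda\) times column \(i\) to column \(j\) is exactly the map \(A \mapsto A\,T_{ij}(\lambda)\). Since \(T_{ij}(\lambda)\) is triangular with \(1\)'s on the diagonal, \(\det T_{ij}(\lambda) = 1\). Part \eqref{lem:elem1_pt3} is then immediate: if \(A \sim B\) then \(B = AE\) for some finite product \(E\) of transvections, and multiplicativity of the determinant gives \(\det B = \det A \cdot \det E = \det A\). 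I would phrase this as a one-line induction on the number of operations.

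For part \eqref{lem:elem1_pt1} I would argue constructively, by Gaussian elimination carried out on columns. Given \(\det A \neq 0\), the first row is nonzero (otherwise cofactor expansion would force \(\det A = 0\)). If the \((1,1)\) entry vanishes, I first add to column \(1\) some other column whose first entry is nonzero, creating a nonzero pivot \(\alpha_1\) without any swap; then I subtract suitable multiples of column \(1\) from the remaining columns to clear the rest of the first row. The bottom-right \((n-1)\times(n-1)\) block is again invertible, since by cofactor expansion its determinant is \(\det A / \alpha_1 \neq 0\), so iterating reduces \(A\) to a lower-triangular matrix with nonzero diagonal entries \(\alpha_1, \ldots, \alpha_n\). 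A second sweep then clears the entries below the diagonal, working from column \(n\) inward: each already-diagonalised column is a multiple of a standard basis vector, so it can be used to clear the corresponding entry in the earlier columns without disturbing the rest. This yields \(\mathrm{diag}(\alpha_1, \ldots, \alpha_n)\). By part \eqref{lem:elem1_pt3} the determinant is unchanged throughout, and the determinant of a diagonal matrix is \(\prod_i \alpha_i\), giving the stated formula.

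Part \eqref{lem:elem1_pt2} I would handle directly from linear dependence rather than through the elimination above. If \(\det A = 0\), the columns \(v_1, \ldots, v_n\) are linearly dependent, so there is a relation \(\sum_j c_j v_j = 0\) with some \(c_k \neq 0\). Solving for \(v_k\) and adding the multiples \(c_j/c_k\) of each column \(v_j\) (\(j \neq k\)) to column \(k\) — a sequence of transvections, each of which leaves the other columns fixed — turns column \(k\) into the zero vector. Hence \(A\) is column-equivalent to a matrix with a zero column.

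The only genuinely delicate point is the pivoting in part \eqref{lem:elem1_pt1}: because the permitted operations add multiples of columns but may neither swap nor rescale them, the usual ``if the pivot is zero, swap rows'' step is unavailable and must be replaced by the ``add a column to create a nonzero pivot'' manoeuvre described above. Everything else is routine bookkeeping. One could alternatively deduce part \eqref{lem:elem1_pt1} from the theorem that transvections generate \(\mathrm{SL}_n(\mathbb{R})\) — choose any diagonal \(B\) with \(\prod_i \alpha_i = \det A\), observe that \(A^{-1}B \in \mathrm{SL}_n(\mathbb{R})\), and conclude \(A \sim B\) — but the constructive argument is more elementary and self-contained.
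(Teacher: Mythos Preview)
The paper states this lemma without proof, describing it as ``a few familiar facts of linear algebra, which we gather here for convenience''; there is no argument in the paper to compare against. Your proof is correct and self-contained: the transvection description handles part \eqref{lem:elem1_pt3} cleanly, the two-sweep column elimination for part \eqref{lem:elem1_pt1} is carefully done (in particular, you correctly note that column swaps and rescalings are \emph{not} among the permitted operations, and you work around this by adding a column to create a pivot rather than swapping), and the direct dependence argument for part \eqref{lem:elem1_pt2} is the natural one. Nothing needs to be added.
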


The next lemma is equally elementary, though slightly less standard.

\begin{lemma}
\label{lem:elem2}
    Let $A=(a_{ij})$ and $B=(b_{ij})$ be $m\times n$ matrices such that $A\sim B$. Let $\bm{1}_r=(1,1,\cdots, 1)^\top$ denote the $r$-dimensional column vector with all entries $1$. Given positive integers $r_1, \dots, r_m$, define $\left(\sum_{i=1}^m r_i\right)\times n$ matrices
        \[
        \widetilde{A} =
        \begin{pmatrix}
            a_{11}\bm{1}_{r_1}&a_{12}\bm{1}_{r_1}&\dots&a_{1n}\bm{1}_{r_1}\\
            a_{21}\bm{1}_{r_2}&a_{22}\bm{1}_{r_2}&\dots&a_{2n}\bm{1}_{r_2}\\
            \vdots&\vdots&\ddots&\vdots\\
            a_{m1}\bm{1}_{r_m}&a_{m2}\bm{1}_{r_m}&\dots&a_{mn}\bm{1}_{r_m}
        \end{pmatrix} \]
        and
        \[\widetilde{B} =
        \begin{pmatrix}
            b_{11}\bm{1}_{r_1}&b_{12}\bm{1}_{r_1}&\dots&b_{1n}\bm{1}_{r_1}\\
            b_{21}\bm{1}_{r_2}&b_{22}\bm{1}_{r_2}&\dots&b_{2n}\bm{1}_{r_2}\\
            \vdots&\vdots&\ddots&\vdots\\
            b_{m1}\bm{1}_{r_m}&b_{m2}\bm{1}_{r_m}&\dots&b_{mn}\bm{1}_{r_m}
        \end{pmatrix}.\]
        Then $\widetilde{A}\sim\widetilde{B}$.
\end{lemma}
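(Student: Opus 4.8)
The plan is to recognise the passage $A \mapsto \widetilde{A}$ as left-multiplication by a fixed matrix, and to recall that an elementary column operation is right-multiplication by an elementary matrix; the result then follows immediately because left and right multiplication commute.

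First I would introduce the $\left(\sum_{i=1}^m r_i\right)\times m$ \emph{row-duplication matrix}
\[
R=
\begin{pmatrix}
\mathbf{1}_{r_1}& & \\
& \ddots & \\
& & \mathbf{1}_{r_m}
\end{pmatrix},
\]
whose $i^{\th}$ block column is the vector $\mathbf{1}_{r_i}$ placed in the $i^{\th}$ block of rows, with all other entries zero. A direct check shows that for any $m\times n$ matrix $A$ one has $\widetilde{A}=RA$: left-multiplication by $R$ repeats the $i^{\th}$ row of $A$ exactly $r_i$ times, which is precisely the definition of $\widetilde{A}$ (and likewise $\widetilde{B}=RB$).

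Next I would recall that adding $c$ times the $k^{\th}$ column to the $l^{\th}$ column of an $n$-column matrix is effected by right-multiplication by the $n\times n$ elementary matrix $E=I_n+c\,e_ke_l^{\top}$. Hence, if $B$ is obtained from $A$ by a single such operation, say $B=AE$, then
\[
\widetilde{B}=RB=R(AE)=(RA)E=\widetilde{A}\,E,
\]
so $\widetilde{B}$ arises from $\widetilde{A}$ by the very same elementary column operation, giving $\widetilde{A}\sim\widetilde{B}$. Since $A\sim B$ means $B=AE_1E_2\cdots E_p$ for elementary matrices $E_1,\ldots,E_p$, iterating this identity yields $\widetilde{B}=\widetilde{A}\,E_1E_2\cdots E_p$, so $\widetilde{A}$ and $\widetilde{B}$ are related by the same finite sequence of column operations, which proves the lemma.

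There is no serious obstacle here; the only point requiring care is the bookkeeping that inflation is a \emph{left} action on rows while an elementary column operation is a \emph{right} action on columns, so the two commute automatically. This is exactly what allows the column-equivalence relation to be transported to the inflated matrices, and it is worth stating explicitly rather than verifying entrywise, since the entrywise computation—while routine—obscures the structural reason the statement is true.
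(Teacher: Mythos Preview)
Your argument is correct. The paper actually omits the proof of this lemma entirely, remarking only that it is ``equally elementary, though slightly less standard'' before moving on to use it. Your approach---recognising that $\widetilde{A}=RA$ for the row-duplication matrix $R$ and that column operations are right-multiplications, so $\widetilde{B}=RB=RAE_1\cdots E_p=\widetilde{A}E_1\cdots E_p$---is exactly the clean structural reason the statement holds, and it is the natural way to fill in the omitted justification.
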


From Lemmas \ref{lem:elem1} and \ref{lem:elem2} one can derive the following statement, which is the one that we will actually use.

\begin{lemma}
\label{lem:elem3}
    Let $n>0$ and $r_1, \dots, r_m>0$, $n=\sum_{i=1}^m r_i$ be 
    positive integers. Let $A=(a_{ij})$ be an $m\times m$ 
    matrix and $M$ be an $n\times n$ matrix of the form 
    \[
    M=
    \begin{pmatrix}
        a_{11}\bm{1}_{r_1}& a_{12}\bm{1}_{r_1}&\dots& a_{1n}\bm{1}_{r_1}&*&\dots&*\\
        a_{21}\bm{1}_{r_2}& a_{22}\bm{1}_{r_2}&\dots& a_{2n}\bm{1}_{r_2}&*&\dots&*\\
        \vdots & \vdots &\ddots& \vdots& \vdots&\ddots& \vdots\\
        a_{m1}\bm{1}_{r_m}& a_{m2}\bm{1}_{r_m}&\dots& a_{mm}\bm{1}_{r_m}&*&\dots&*
    \end{pmatrix}.
    \]
    Then 
\begin{equation}
\label{eq:detM}
    \det(M)=\det(A)\cdot\det
    \begin{pmatrix}
        \bm{1}_{r_1}& 0&\dots& 0&*&\dots&*\\
        0& \bm{1}_{r_2}&\dots& 0&*&\dots&*\\
        \vdots & \vdots &\ddots& \vdots& \vdots&\ddots& \vdots\\
        0& 0&\dots& \bm{1}_{r_m}&*&\dots&*
    \end{pmatrix}.
\end{equation}
\end{lemma}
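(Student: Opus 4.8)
The plan is to split into two cases according to whether $\det(A)$ vanishes, and in each case to import a reduction of the small matrix $A$ into the large matrix $M$. The engine is the observation that any sequence of elementary column operations performed on the first $m$ columns of $M$ acts on the blown-up block $\widetilde{A}$ occupying those columns exactly as the corresponding operations act on $A$ itself---this is the content of \Cref{lem:elem2}---and, being confined to the first $m$ columns, it leaves the trailing $*$-columns untouched. By \Cref{lem:elem1}~\eqref{lem:elem1_pt3} such operations preserve $\det(M)$, so I am free to replace the $\widetilde{A}$-block by the blow-up of any matrix column-equivalent to $A$.

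In the case $\det(A) \neq 0$, I would first invoke \Cref{lem:elem1}~\eqref{lem:elem1_pt1} to write $A \sim \operatorname{diag}(\alpha_1, \ldots, \alpha_m)$ with $\det(A) = \prod_{i} \alpha_i$. Blowing this reduction up by \Cref{lem:elem2} and applying the resulting operations to $M$ produces a column-equivalent matrix $M'$ whose $i$-th column (for $i \leq m$) equals $\alpha_i \bm{1}_{r_i}$ in the $i$-th row-block and is zero elsewhere, while its last $n-m$ columns remain the original $*$-columns. Pulling the scalar $\alpha_i$ out of the $i$-th column for each $i = 1, \ldots, m$ then exhibits $\det(M')$ as $\prod_i \alpha_i = \det(A)$ times the determinant of the matrix obtained by replacing each $\alpha_i \bm{1}_{r_i}$ with $\bm{1}_{r_i}$---and this latter matrix is exactly the one on the right-hand side of \eqref{eq:detM}. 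Since $\det(M) = \det(M')$, the identity follows.

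In the case $\det(A) = 0$, I would instead use \Cref{lem:elem1}~\eqref{lem:elem1_pt2} to find $A \sim A'$ where $A'$ has a zero column, say the $k$-th. Its blow-up $\widetilde{A'}$ then has a zero $k$-th column, and importing these operations into $M$ yields a column-equivalent matrix with a zero $k$-th column; hence $\det(M) = 0$. As $\det(A) = 0$ forces the right-hand side of \eqref{eq:detM} to vanish as well, the identity again holds.

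I expect the only step needing genuine care to be the bookkeeping flagged in the first paragraph: checking that the blow-up of a column operation on $A$ is realised by a column operation on $M$ that touches only the first $m$ columns, so that the arbitrary $*$-columns are carried along unchanged. This is precisely what \Cref{lem:elem2} supplies, so once that lemma is available each case reduces to a short computation.
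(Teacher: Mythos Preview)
Your proposal is correct and follows essentially the same argument as the paper: both split into the cases $\det(A)=0$ and $\det(A)\neq 0$, invoke \Cref{lem:elem1} to reduce $A$ to a convenient column-equivalent form, and then use \Cref{lem:elem2} to transport that reduction into the first $m$ columns of $M$. Your write-up is in fact a bit more careful than the paper's about the bookkeeping point---that the imported column operations leave the trailing $*$-columns untouched---which the paper leaves implicit.
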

\begin{proof}
    Suppose $\det(A)=0$. Then by Lemma \ref{lem:elem1} \eqref{lem:elem1_pt2} and 
    Lemma \ref{lem:elem2}, $M$ is column-equivalent to 
    a matrix with a zero column, so in this case, both sides of (\ref{eq:detM}) 
    are zero. If $\det(A)\neq 0$, then by Lemma \ref{lem:elem1} \eqref{lem:elem1_pt1} 
    and 
    Lemma \ref{lem:elem2}, $M$ is column-equivalent to 
    \[
        \begin{pmatrix}
        \alpha_1\bm{1}_{r_1}& 0&\dots& 0&*&\dots&*\\
        0& \alpha_2\bm{1}_{r_2}&\dots& 0&*&\dots&*\\
        \vdots & \vdots &\ddots& \vdots& \vdots&\ddots& \vdots\\
        0& 0&\dots& \alpha_n\bm{1}_{r_n}&*&\dots&*
    \end{pmatrix}, 
    \]
    with $\det(A)=\alpha_1\cdots\alpha_n$. Then Lemma 
    \ref{lem:elem1} \eqref{lem:elem1_pt3} yields (\ref{eq:detM}). 
\end{proof}

We can now proceed to the two propositions on which \Cref{thm:main} depends. Throughout what follows, we adopt the following notational conventions:
\begin{itemize}
    \item Given a matrix $A=(a_{ij})_{ij}$ (respectively, a vector $\bm{a}=(a_i)_i$), we denote the entry-wise $p$-th power by $A^{\langle p\rangle}=(a_{ij}^p)_{ij}$ (respectively, $\bm{a}^{\langle p\rangle}=(a_i^p)_i$). 
    \item Given a non-negative integral vector $\bm{p}=(p_1, \dots, p_n)$, we write $\bm{p}!=\prod_{i}p_i!$ and $|\bm{p}|=\sum_{i}p_i$. 
    \item The symbol $\runij$ will refer to an index running over $1, \dots, n$, while $\runab$ runs over $1, \dots, r_i$ (or $r_j$). 
    \item $\bm{1}_r=(1,1,\cdots, 1)^\top$ will denote the column vector of length \(r\) with all entries $1$, and $\bm{1}_{m, n}$ will denote the $m\times n$ matrix with all entries $1$. 
\end{itemize}

\begin{proposition}
\label{prop:poly_exists}
Let \(X\) be an \(m\)-point metric space such that \(\det(Z_X) \neq 0\). For each \(\mathbf{r} = (r_1,\ldots,r_m)\) there exists a polynomial
\[
F_{X,\mathbf{r}} \in \mathbb{R}[a_{ij}^{\alpha\beta}]
\]
(where $i,j= 1, \ldots, m$; $1\leq \alpha\leq r_i$; and $1\leq\beta\leq r_j$) with the following property.
If \(f \colon Y \surj X\), represented by the ordering \((y_1^1, \ldots y_1^{r_1}, \ldots, y_m^{1}, \ldots, y_m^{r_m})\) of \(Y\), satisfies
\[F_{X,\mathbf{r}} (d_Y(y_{i}^{\alpha}, y_j^\beta))\neq 0,\]
 then 
\[\lim_{t\to 0}|Y_{f,t}|=|X|.\]
\end{proposition}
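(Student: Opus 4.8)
The plan is to realise \(|Y_{f,t}|\) as a ratio of two determinants, each of which degenerates as \(t\to 0\), and to show that numerator and denominator vanish to the same order in \(t\), with leading coefficients whose ratio is exactly \(|X|\). Write \(c_{ij}=e^{-d_X(x_i,x_j)}\) for the entries of \(Z_X\) and set \(\delta_{ij}^{\alpha\beta}=d_Y(y_i^\alpha,y_j^\beta)-d_X(x_i,x_j)\). A direct computation from \Cref{def:GH_lines} gives, for the \(n\times n\) similarity matrix \(Z(t):=Z_{Y_{f,t}}\),
\[Z(t)\big((i,\alpha),(j,\beta)\big)=c_{ij}\,e^{-t\,\delta_{ij}^{\alpha\beta}}.\]
To treat numerator and denominator on an equal footing, I would invoke the bordered-determinant identity \(\bm{1}^\top\adj(Z)\bm{1}=-\det\left(\begin{smallmatrix}Z&\bm{1}\\\bm{1}^\top&0\end{smallmatrix}\right)\), so that whenever \(Z(t)\) is invertible,
\[|Y_{f,t}|=\frac{\bm{1}^\top\adj(Z(t))\bm{1}}{\det Z(t)}=\frac{-\det\left(\begin{smallmatrix}Z(t)&\bm{1}\\\bm{1}^\top&0\end{smallmatrix}\right)}{\det Z(t)}.\]
The advantage is that both are now determinants of matrices built from \(Z(t)\), so a single reduction governs both.

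Next I would pin down the order of vanishing. As \(t\to 0\) the columns \((j,1),\dots,(j,r_j)\) of \(Z(t)\) all tend to the common limit \((c_{ij})_{(i,\alpha)}\), so \(Z(0)\) has rank \(m\) and \(\det Z(t)\to 0\). To quantify this, within each block \(j\) I subtract column \((j,1)\) from each column \((j,\beta)\), \(\beta\geq 2\); this is an elementary column operation, leaving the determinant unchanged by \Cref{lem:elem1}, and it produces \(n-m\) columns with entries \(c_{ij}\big(e^{-t\delta_{ij}^{\alpha\beta}}-e^{-t\delta_{ij}^{1\beta}}\big)\) that vanish at \(t=0\) and are hence divisible by \(t\). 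Factoring out one \(t\) per such column shows \(\det Z(t)\) is divisible by \(t^{\,n-m}\). Performing the same operation on the bordered matrix — and noting that subtracting equal entries of the all-ones bordering row zeroes out the border entries in the secondary columns — shows the numerator is likewise divisible by \(t^{\,n-m}\).

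It then remains to identify the coefficient of \(t^{\,n-m}\) in each, which is where \Cref{lem:elem3} enters. After the column reduction and division by \(t^{\,n-m}\), the \(m\) primary columns tend at \(t=0\) to the block-constant columns \(c_{ij}\bm{1}_{r_i}\), while the rescaled secondary columns tend to a finite limit. Applying \Cref{lem:elem3} with \(A=Z_X\) pulls out a factor \(\det(Z_X)\) and leaves the determinant of an \((n-m)\times(n-m)\) matrix \(S\); a further row reduction (subtracting primary from secondary rows within each block) shows that the entries of \(S\) are the double differences
\[c_{ij}\big[(d_Y(y_i^\alpha,y_j^\beta)-d_Y(y_i^\alpha,y_j^1))-(d_Y(y_i^1,y_j^\beta)-d_Y(y_i^1,y_j^1))\big],\]
in which the \(d_X\)-terms cancel, so that \(\det S\) is a genuine polynomial in the variables \(a_{ij}^{\alpha\beta}=d_Y(y_i^\alpha,y_j^\beta)\). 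I would define \(F_{X,\mathbf{r}}:=\det S\). Running the identical reduction on the bordered numerator, the extra border row and column survive the operations as a single \(\bm{1}_m\) attached to the primary block, and a block-triangular expansion gives \(\det\left(\begin{smallmatrix}Z_X&\bm{1}_m\\\bm{1}_m^\top&0\end{smallmatrix}\right)\cdot\det S=-\det(Z_X)\,|X|\,\det S\). Thus the leading coefficient of \(\det Z(t)\) is \(\det(Z_X)\,F_{X,\mathbf{r}}\) and that of the numerator is \(\det(Z_X)\,|X|\,F_{X,\mathbf{r}}\). Provided \(F_{X,\mathbf{r}}\) does not vanish at the given distances, both determinants are genuinely of order \(t^{\,n-m}\) — so \(Z(t)\) is invertible, and \(|Y_{f,t}|\) is defined, for small \(t>0\) — and their ratio tends to \(|X|\).

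I expect the main obstacle to be the bookkeeping in the last step: verifying that no lower-order contamination survives, that the coefficient of \(t^{\,n-m}\) is captured exactly by \(\det S\), and that \emph{the same} matrix \(S\) governs both numerator and denominator, so that the factor \(|X|\) emerges cleanly. This is precisely the delicacy flagged in the remark — the limit matrix is singular, so naive continuity of \(Z\mapsto Z^{-1}\) is unavailable, and one must instead match the orders of vanishing. The bordered-determinant reformulation is what makes this parallelism transparent. (That \(F_{X,\mathbf{r}}\) is not the zero polynomial is a separate matter, presumably settled by the companion \Cref{prop:poly_nonzero}.)
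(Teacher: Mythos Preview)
Your approach is correct and takes a genuinely different route from the paper's. The paper expands each entry of \(Z(t)\) as a power series in \(t\), uses column-multilinearity of the determinant to write \(\det Z(t)\) as a sum indexed by exponent vectors \(\bm{p}\), and identifies the lowest-order survivors as those with exactly one zero exponent per column block; this produces \(F_{X,\mathbf{r}}\) as a \emph{sum} of \(r_1\cdots r_m\) determinants of size \(n\times n\). The numerator is handled separately via \(\sumofentry(\adj Z)=\sum_k\det(Z\text{ with column }k\text{ replaced by }\bm{1})\), with an extra argument (alternation) needed to kill a potential degree-\((n-m-1)\) term. Your column reduction isolates the factor \(t^{\,n-m}\) in one stroke, and the bordered-determinant reformulation makes the numerator and denominator run in parallel automatically, so no separate cancellation argument is required; your \(F_{X,\mathbf{r}}=\det S\) is a single \((n-m)\times(n-m)\) determinant. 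Since both expressions compute the coefficient of \(t^{\,n-m}\) in \(\det Z(t)\) divided by \(\det Z_X\), they agree as polynomials up to the sign \((-1)^{n-m}\), but yours is the more compact formula—at the cost of breaking the manifest \(\mathfrak{S}_{r_1}\times\cdots\times\mathfrak{S}_{r_m}\)-symmetry that the paper's sum retains. Two small points to tidy in the write-up: after subtracting column \((j,1)\) from column \((j,\beta)\), the entry in row \((i,\alpha)\) involves \(\delta_{ij}^{\alpha 1}\), not \(\delta_{ij}^{1\beta}\) (your later double-difference formula has the indices right); and \Cref{lem:elem3} as stated does not apply to the \((n{+}1)\times(n{+}1)\) bordered matrix, but you do not need it there—your column and row reductions already put that matrix into block-triangular form with diagonal blocks \(\left(\begin{smallmatrix}Z_X&\bm{1}_m\\\bm{1}_m^\top&0\end{smallmatrix}\right)\) and \(S\), giving the factorisation directly.
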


\begin{proof}
Throughout this proof, let \(Z_{f,t}\) denote the similarity matrix of \(Y_{f,t}\), and, given an \(n \times n\) matrix \(M\), let \(\sumofentry(M) = \sum_{i,j=1}^n M(i,j)\).

Using the expression for magnitude in equation \eqref{eq:mag_def}, we have
\begin{equation}
\label{eq:magXt}
    |Y_{f,t}|=\frac{\sumofentry (\adj (Z_{f,t}))}{\det (Z_{f,t})}
\end{equation}
whenever \(\det(Z_{f,t}) \neq 0\). For each \(y,y' \in Y_{f,t}\), the \((y,y')\)-entry of \(Z_{f,t}\) is
\[e^{-d_X(f(y),f(y'))} \cdot e^{-t(d_Y(y,y') - d_X(f(y),f(y')))};\]
hence, \(\det(Z_{f,t})\) is a \emph{generalized polynomial} in \(e^{-t}\), in the sense of \cite[\S 2]{roff2023small} (but now with real coefficients). It follows that \(\det(Z_{f,t})\) vanishes for at most finitely many \(t\). In particular, for all sufficiently small \(t > 0\) we must have \(\det(Z_{f,t}) \neq 0\), so the expression \eqref{eq:magXt} is valid for \(0 < t \ll 1\). We will compute the denominator and the numerator of \eqref{eq:magXt} separately, beginning with the denominator.

First, we introduce some compressed notation. For \(i,j \in \{1, \ldots, m\}\) and \(\alpha \in \{1,\ldots, r_i\}\), \(\beta \in \{1, \ldots, r_j\}\), let \(d_{ij} = d_X(x_i,x_j)\) and let
\[c_{ij}^{\alpha\beta} =d_Y(y_{i}^{\alpha}, y_{j}^{\beta})-d_X(x_i,x_j).\]
Then 
\[
d_{Y_{f,t}}(y_{i}^{\alpha}, y_{j}^{\beta})=
d_X(x_i,x_j) +tc_{ij}^{\alpha\beta}. 
\]
Let us define for each \(i,j \in \{1, \ldots, m\}\)
an $r_i\times r_j$ matrix 
\[
\Gamma_{ij}{(t)}
=
(\exp(-t c_{ij}^{\alpha\beta}))_{
\substack{\alpha=1, \dots, r_i\\
\beta=1, \dots, r_j}}. 
\]
Using these matrices, one can write \(Z_{f,t}\) as the block matrix
\[
Z_{f,t}=
(e^{-d_{ij}} \cdot \Gamma_{ij}(t))_{
i, j=1, \dots, m}. 
\]
Expanding the entries of \(\Gamma_{ij}{(t)}\) as power series in \(t\), we see that the $(\alpha, \beta)$ 
entry of the $(i, j)$ block of \(Z_{f,t}\) is 
\begin{equation}
\label{eq:exponential}
e^{-d_{ij}}\cdot e^{-t c_{ij}^{\alpha\beta}} =
e^{-d_{ij}}\sum_{p\geq 0}\frac{(-tc_{ij}^{\alpha\beta})^p}{p!}. 
\end{equation}

We can now examine the denominator of \eqref{eq:magXt}. Using the multilinearity of the determinant for columns, we have the following expansion of $\det(Z_{f,t})$:
\[
\sum_{\bm{p}_{\runij}^{\runab}}
\frac{(-t)^{|\bm{p}_{\runij}^{\runab}|}}{\bm{p}_{\runij}^{\runab}!}
\det
\begin{pmatrix}
    e^{-d_{11}}(\bm{c}_{11}^{\runab 1})^{\langle p_1^1\rangle}&
    e^{-d_{11}}(\bm{c}_{11}^{\runab 2})^{\langle p_1^2\rangle}&
    \dots&
    e^{-d_{1m}}(\bm{c}_{1m}^{\runab r_m})^{\langle p_m^{r_m}\rangle}\\
    e^{-d_{21}}(\bm{c}_{21}^{\runab 1})^{\langle p_1^1\rangle}&
    e^{-d_{22}}(\bm{c}_{21}^{\runab 2})^{\langle p_1^2\rangle}&
    \dots&
    e^{-d_{2m}}(\bm{c}_{2m}^{\runab r_m})^{\langle p_m^{r_m}\rangle}\\
    \vdots&\vdots&\ddots&\vdots\\
    e^{-d_{m1}}(\bm{c}_{m1}^{\runab 1})^{\langle p_1^1\rangle}&
    e^{-d_{m2}}(\bm{c}_{m1}^{\runab 2})^{\langle p_1^2\rangle}&
    \dots&
    e^{-d_{nn}}(\bm{c}_{mm}^{\runab r_m})^{\langle p_m^{r_m}\rangle}
\end{pmatrix}, 
\]
where $\bm{p}_{\runij}^{\runab}=(p_i^\alpha)_{\substack{i=1, \dots, m\\ \alpha=1, \dots, r_i}}$ runs over vectors of length \(n = \sum r_i\) with entries in \(\mathbb{Z}_{\geq 0}\).

Let us call the block 
\[
\begin{pmatrix}
    e^{-d_{1j}}(\bm{c}_{1j}^{\runab 1})^{\langle p_j^1\rangle}&
    e^{-d_{1j}}(\bm{c}_{1j}^{\runab 2})^{\langle p_j^2\rangle}&
    \dots&
    e^{-d_{1j}}(\bm{c}_{1j}^{\runab r_j})^{\langle p_j^{r_j}\rangle}\\
    e^{-d_{2j}}(\bm{c}_{2j}^{\runab 1})^{\langle p_j^1\rangle}&
    e^{-d_{2j}}(\bm{c}_{2j}^{\runab 2})^{\langle p_j^2\rangle}&
    \dots&
    e^{-d_{2j}}(\bm{c}_{2j}^{\runab r_j})^{\langle p_j^{r_j}\rangle}\\
    \vdots&\vdots&\ddots&\vdots\\
    e^{-d_{nj}}(\bm{c}_{mj}^{\runab 1})^{\langle p_j^1\rangle}&
    e^{-d_{nj}}(\bm{c}_{mj}^{\runab 2})^{\langle p_j^2\rangle}&
    \dots&
    e^{-d_{nj}}(\bm{c}_{mj}^{\runab r_j})^{\langle p_j^{r_j}\rangle}
\end{pmatrix}
\]
the $j$-th column block.  
If $p_j^\beta=0$, then the $\beta$-th column of the $j$-th 
column block is 
\[
\begin{pmatrix}
    e^{-d_{1j}}\bm{1}_{r_1}\\
    e^{-d_{2j}}\bm{1}_{r_2}\\
    \vdots\\
    e^{-d_{mj}}\bm{1}_{r_m}
\end{pmatrix}. 
\]
If two of $p_j^1, p_j^2, \dots, p_j^{r_j}$ are zero, the corresponding columns are identical, so it is enough to consider 
$p_{\runij}^{\runab}$ such that at most one of $p_j^1, \dots, p_j^{r_j}$ is zero for each $j=1, \dots, m$. 
Hence, the lowest degree term appearing with non-zero coefficient is 
\[
(-t)^{\sum_{i=1}^m (r_i -1)} = (-t)^{n - m}
\]
which is attained by 
the indices of the form $\bm{p}_j^{\runab}=(1, \dots, 1, 0, 1, \dots, 1)$.
The coefficient of $(-t)^{n-m}$ is 
\begin{equation}
\label{eq:coeff}
\sum_{\substack{\beta_1=1, \dots, r_1\\ \cdots\\ \beta_m=1, \dots, r_m}}
\det
\begin{pmatrix}
    \dots&
    e^{-d_{1j}}(\bm{c}_{1j}^{\runab 1})&
    \dots&
    \stackrel{\beta_j}{\widehat{e^{-d_{1j}}(\bm{1}_{r_1})}}&
    \dots&
    e^{-d_{1j}}(\bm{c}_{1j}^{\runab r_j})&
    \dots\\
    \dots&
    e^{-d_{2j}}(\bm{c}_{2j}^{\runab 1})&
    \dots&
    e^{-d_{2j}}(\bm{1}_{r_2})&
    \dots&
    e^{-d_{2j}}(\bm{c}_{2j}^{\runab r_j})&
    \dots\\
    &\vdots&\ddots&\vdots&\ddots&\vdots\\
    \dots&
    e^{-d_{nj}}(\bm{c}_{mj}^{\runab 1})&
    \dots&
    e^{-d_{nj}}(\bm{1}_{r_m})&
    \dots&
    e^{-d_{nj}}(\bm{c}_{mj}^{\runab r_j})&
    \dots
\end{pmatrix}. 
\end{equation}
Here, the sum is taken over matrices which have $e^{-d_{\runij j}}\bm{1}_{r_{\runij}}$ as the 
$\beta_j$-th column in the $j$-th column block,
with the other columns being of the form 
$e^{-d_{\runij j}}(\bm{c}_{\runij j}^{\runab\beta})$. 

Now we define the polynomial $F_{X,\mathbf{r}} \in \mathbb{R}[a_{ij}^{\alpha\beta}]$. For each \(i,j\), let \(\bm{d}_{ij} = d_{ij} \bm{1}_{r_i}\). Then \(F_{X,\mathbf{r}}({a}_{i,j}^{\alpha\beta})\) is defined to be
\[
\sum_{\substack{\beta_1=1, \dots, r_1\\ \cdots\\ \beta_m=1, \dots, r_m}}
\det
\begin{pmatrix}
    \dots&
    e^{-d_{1j}}(\bm{a}_{1j}^{\runab 1} - \bm{d}_{1j})&
    \dots&
    \stackrel{\beta_j}{\widehat{\delta_{1j}(\bm{1}_{r_1})}}&
    \dots&
    e^{-d_{1j}}(\bm{a}_{1j}^{\runab r_j} - \bm{d}_{1j})&
    \dots\\
    \dots&
    e^{-d_{2j}}(\bm{a}_{2j}^{\runab 1} - \bm{d}_{2j})&
    \dots&
    \delta_{2j}(\bm{1}_{r_2})&
    \dots&
    e^{-d_{2j}}(\bm{a}_{2j}^{\runab r_j} - \bm{d}_{2j})&
    \dots\\
    &\vdots&\ddots&\vdots&\ddots&\vdots\\
    \dots&
    e^{-d_{mj}}(\bm{a}_{mj}^{\runab 1} - \bm{d}_{mj})&
    \dots&
    \delta_{mj}(\bm{1}_{r_m})&
    \dots&
    e^{-d_{mj}}(\bm{a}_{mj}^{\runab r_j} - \bm{d}_{mj})&
    \dots
\end{pmatrix}
\]
where $\delta_{ij}$ is the Kronecker delta.

Rewriting the formula (\ref{eq:coeff}), but now picking out the $\beta_1$-st, $\cdots$, $\beta_n$-th 
columns, we see that the coefficient of \((-t)^{n-m}\)  in the denominator of \eqref{eq:magXt} is equal to
\[
\sum_{\substack{\beta_1=1, \dots, r_1\\ \cdots\\ \beta_m=1, \dots, r_m}}
\det
\begin{pmatrix}
    \dots&
    \stackrel{\mbox{\tiny $\beta_1$-th of $1$-st block}}{\widehat{e^{-d_{11}}(\bm{1}_{r_1})}}&
    \dots&
    \stackrel{\mbox{\tiny $\beta_j$-th of $j$-th block}}{\widehat{e^{-d_{1j}}(\bm{1}_{r_1})}}&
    \dots&
    \stackrel{\mbox{\tiny $\beta_m$-th of $m$-th block}}{\widehat{e^{-d_{1m}}(\bm{1}_{r_1})}}&
    \dots\\
    \dots&
    e^{-d_{21}}(\bm{1}_{r_2})&
    \dots&
    e^{-d_{2j}}(\bm{1}_{r_2})&
    \dots&
    e^{-d_{2m}}(\bm{1}_{r_2})&
    \dots\\
    &\vdots&\ddots&\vdots&\ddots&\vdots\\
    \dots&
    e^{-d_{m1}}(\bm{1}_{r_m})&
    \dots&
    e^{-d_{mj}}(\bm{1}_{r_m})&
    \dots&
    e^{-d_{mm}}(\bm{1}_{r_m})&
    \dots
\end{pmatrix},
\]
and, applying Lemma \ref{lem:elem3}, we see that this is equal to
\(
F_{X,\mathbf{r}}(d_{ij}^{\alpha\beta}) \cdot \det(e^{-d_{ij}})
\).
By assumption, $\det(e^{-d_{ij}}) \neq 0$. Thus, if \(F_{X,\mathbf{r}}(d_{ij}^{\alpha\beta}) \neq 0\), then the first non-zero term in the denominator of \eqref{eq:magXt} is
\[
F_{X,\mathbf{r}}(d_{ij}^{\alpha\beta}) \cdot \det(e^{-d_{ij}}) (-t)^{n-m}.
\]

Next, we consider the numerator of \eqref{eq:magXt}. 
Note that $\sumofentry (\adj (Z_{X_t}))$ is equal to 
\[
\sum_{\substack{j=1, \dots, m\\ \beta=1, \dots, r_j}}
\det
\begin{pmatrix}
    \dots&
    e^{-d_{1j}^{\runab 1}(t)}&
    \dots&
    \stackrel{\beta}{\widehat{\bm{1}_{r_1}}}&
    \dots&
    e^{-d_{1j}^{\runab r_j}(t)}&
    \dots\\
    \dots&
    e^{-d_{2j}^{\runab 1}(t)}&
    \dots&
    \bm{1}_{r_2}&
    \dots&
    e^{-d_{2j}^{\runab r_j}(t)}&
    \dots\\
    &\vdots&\ddots&\vdots&\ddots&\vdots\\
    \dots&
    e^{-d_{mj}^{\runab 1}(t)}&
    \dots&
    \bm{1}_{r_m}&
    \dots&
    e^{-d_{mj}^{\runab r_j}(t)}&
    \dots\\
\end{pmatrix}. 
\]
We have an expansion using (\ref{eq:exponential}), as in the case of the denominator. 
Unlike in that case, each determinant could be non-zero at degree 
$n-m-1$, where the coefficient is
\[
\sum_{\substack{j=1, \dots, n\\ \beta=1, \dots, r_j}}
\sum_{\substack{\beta_1=1, \dots, r_1\\ \cdots\\ \beta_n=1, \dots, r_n}}
\det
\begin{pmatrix}
    \dots&
    e^{-d_{1j}}(\bm{c}_{1j}^{\runab 1})&
    \dots&
    \stackrel{\beta_j}{\widehat{e^{-d_{1j}}(\bm{1}_{r_1})}}&
    \dots&\stackrel{\beta}{\widehat{\bm{1}_{r_1}}}&\dots&
    e^{-d_{1j}}(\bm{c}_{1j}^{\runab r_j})&
    \dots\\
    \dots&
    e^{-d_{2j}}(\bm{c}_{2j}^{\runab 1})&
    \dots&
    e^{-d_{2j}}(\bm{1}_{r_2})&
    \dots&\bm{1}_{r_2}&\dots&
    e^{-d_{2j}}(\bm{c}_{2j}^{\runab r_j})&
    \dots\\
    &\vdots&\ddots&\vdots&\ddots&\vdots&\ddots&\vdots\\
    \dots&
    e^{-d_{nj}}(\bm{c}_{nj}^{\runab 1})&
    \dots&
    e^{-d_{nj}}(\bm{1}_{r_n})&
    \dots&\bm{1}_{r_n}&\dots&
    e^{-d_{nj}}(\bm{c}_{nj}^{\runab r_j})&
    \dots
\end{pmatrix}. 
\]
However, by the alternating property of the determinant (using exchange between the 
$\beta_j$-th and $\beta$-th columns), this becomes zero. Thus, the lowest degree 
is again $n-m$, and that term has coefficient
\[
\sum_{j=1}^m
\sum_{\substack{\beta_1=1, \dots, r_1\\ \cdots\\ \beta_m=1, \dots, r_m}}
\det
\begin{pmatrix}
    \dots&
    \stackrel{\mbox{\tiny $\beta_1$-th of $1$-st block}}{\widehat{e^{-d_{11}}(\bm{1}_{r_1})}}&
    \dots&
    \stackrel{\mbox{\tiny $\beta_j$-th of $j$-th block}}{\widehat{\bm{1}_{r_1}}}&
    \dots&
    \stackrel{\mbox{\tiny $\beta_m$-th of $m$-th block}}{\widehat{e^{-d_{1m}}(\bm{1}_{r_1})}}&
    \dots\\
    \dots&
    e^{-d_{21}}(\bm{1}_{r_2})&
    \dots&
    \bm{1}_{r_2}&
    \dots&
    e^{-d_{2m}}(\bm{1}_{r_2})&
    \dots\\
    &\vdots&\ddots&\vdots&\ddots&\vdots\\
    \dots&
    e^{-d_{m1}}(\bm{1}_{r_m})&
    \dots&
    \bm{1}_{r_m}&
    \dots&
    e^{-d_{mm}}(\bm{1}_{r_m})&
    \dots
\end{pmatrix}. 
\]
Again applying \Cref{lem:elem3}, this turns to be equal to 
\[
F_{X,\mathbf{r}}(d_{ij}^{\alpha\beta}) \cdot
\sum_{j=1}^m\det
\begin{pmatrix}
    e^{-d_{11}} & \dots & \stackrel{\text{$j$-th column}}{\widehat{1}} & \dots & e^{-d_{1m}}\\
    e^{-d_{21}} & \dots & 1 & \dots & e^{-d_{1m}}\\
    \vdots & \ddots & \vdots & \ddots & \vdots\\
    e^{-d_{m1}} & \dots & 1 & \dots & e^{-d_{mm}}
\end{pmatrix}, 
\]
which is precisely 
$F_{X,\mathbf{r}}(d_{ij}^{\alpha\beta}) \cdot\sumofentry(\adj(Z_X))$. 

We have shown that, for \(0 < t \ll 1\), the magnitude of \(Y_{f,t}\) can be expressed as 
\[
|Y_{f,t}|=\frac{F_{X,\mathbf{r}}(d_{ij}^{\alpha\beta}) \cdot\sumofentry(\adj(Z_X))\cdot (-t)^{n-m}+O(t^{n-m+1})}{F_{X,\mathbf{r}}(d_{ij}^{\alpha\beta}) \cdot\det(Z_X)\cdot (-t)^{n-m}+O(t^{n-m+1})}.
\]
It follows that if $F_{X,\mathbf{r}}(d_{ij}^{\alpha\beta}) \neq 0$, then \(\lim_{t\to 0}|Y_{f,t}|=|X|\).
\end{proof}

It only remains to prove that the polynomial \(F_{X,\mathbf{r}}\) is not constantly zero.

\begin{proposition}
\label{prop:poly_nonzero}
Let \(X\) be any \(m\)-point metric space. For each \(n \geq m\) and \(\mathbf{r} = (r_1,\ldots,r_m)\) such that \(\sum r_i = n\), there exists $f:Y\surj X$ with $\# f^{-1}(x_i)=r_i$, such that $F_{X,\mathbf{r}}(d_Y(y_i^\alpha, y_j^\beta)) \neq 0$.
\end{proposition}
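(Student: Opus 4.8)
The plan is to prove the proposition constructively, by exhibiting a single metric space \(Y\) together with a surjection \(f\colon Y\surj X\) having fibres of the prescribed sizes, at whose distance matrix \(F_{X,\mathbf r}\) does not vanish. The space I would use is the \emph{blow-up} of \(X\) in which each point \(x_i\) is replaced by a tiny copy of the complete graph \(K_{r_i}\): fix a small \(\epsilon>0\) and set \(d_Y(y_i^\alpha,y_j^\beta)=d_X(x_i,x_j)\) whenever \(i\neq j\), and \(d_Y(y_i^\alpha,y_i^\beta)=\epsilon\) for \(\alpha\neq\beta\). The first step is to check that this is a genuine metric when \(\epsilon\) is below twice the least distance in \(X\); the triangle inequalities split into cases according to how the three points are distributed among the fibres, and each case is routine. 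The surjection \(f(y_i^\alpha)=x_i\) then has \(\#f^{-1}(x_i)=r_i\), as required.

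The heart of the matter is that this choice forces the perturbations \(c_{ij}^{\alpha\beta}=d_Y(y_i^\alpha,y_j^\beta)-d_X(x_i,x_j)\) to vanish whenever \(i\neq j\), and this decouples the blocks in the definition of \(F_{X,\mathbf r}\). Indeed, once \(c_{ij}^{\alpha\beta}=0\) for \(i\neq j\), every column of the \(j\)-th column block of each matrix in the defining sum — both the special column \(\delta_{\cdot j}\bm{1}_{r_\cdot}\) and each perturbation column \(e^{-d_{\cdot j}}(\bm{a}_{\cdot j}^{\runab\beta}-\bm{d}_{\cdot j})\) — is supported on the rows of the \(j\)-th row block. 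Hence each matrix in the sum is block diagonal, its determinant factors over the blocks, and, interchanging the independent sum over \((\beta_1,\ldots,\beta_m)\) with the product over blocks, one obtains
\[
F_{X,\mathbf r}\bigl(d_Y(y_i^\alpha,y_j^\beta)\bigr)=\prod_{j=1}^m\left(\sum_{\beta_j=1}^{r_j}\det\bigl(C_j^{(\beta_j)}\bigr)\right),
\]
where \(C_j=\bigl(\epsilon(1-\delta_{\alpha\beta})\bigr)_{\alpha,\beta}\) is the (scaled-\(K_{r_j}\)) distance matrix of the \(j\)-th fibre and \(C_j^{(\beta_j)}\) denotes \(C_j\) with its \(\beta_j\)-th column replaced by \(\bm{1}_{r_j}\).

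Each factor is then evaluated by a cofactor expansion, which is exactly the \(m=1\) instance of the whole problem: expanding \(\det(C_j^{(\beta_j)})\) along its \(\beta_j\)-th column and summing over \(\beta_j\) gives \(\sum_{\beta_j}\det(C_j^{(\beta_j)})=\sumofentry(\adj(C_j))\). To see this is non-zero, note that for \(r_j=1\) it equals \(1\), while for \(r_j\geq2\) the matrix \(C_j=\epsilon(\bm{1}_{r_j}\bm{1}_{r_j}^\top-I)\) has \(\bm{1}_{r_j}\) as an eigenvector with eigenvalue \(\epsilon(r_j-1)\neq0\), so it is invertible and \(\sumofentry(\adj C_j)=\det(C_j)\,\bm{1}_{r_j}^\top C_j^{-1}\bm{1}_{r_j}=(-1)^{r_j-1}r_j\,\epsilon^{\,r_j-1}\neq0\). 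Taking the product over \(j\) yields \(F_{X,\mathbf r}(d_Y(y_i^\alpha,y_j^\beta))=(-1)^{n-m}\bigl(\prod_j r_j\bigr)\epsilon^{\,n-m}\neq0\), which finishes the argument.

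The step I expect to require the most care is the block-diagonalisation: one must verify that killing the cross-fibre perturbations genuinely confines every column of the \(j\)-th block to the \(j\)-th row block (so that no surviving \(e^{-d_{ij}}\) cross-terms spoil the factorisation), and that the multi-index sum over \((\beta_1,\ldots,\beta_m)\) separates cleanly into the product of the per-block sums. Everything else — the triangle-inequality verification and the adjugate computation — is elementary. As a remark, the same reasoning shows more: the restriction of \(F_{X,\mathbf r}\) to the locus where cross-fibre perturbations vanish equals \(\prod_j\sumofentry(\adj C_j)\) as a polynomial, so \(F_{X,\mathbf r}\) is not identically zero, and since \(\mathrm{Met}_n\) has non-empty interior one could alternatively conclude that the non-vanishing holds on a dense open set of genuine metrics.
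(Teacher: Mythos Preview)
Your proof is correct and follows essentially the same route as the paper: take \(Y\) to be \(X\) with each point \(x_i\) blown up into a copy of \(K_{r_i}\), observe that the vanishing of the cross-fibre perturbations \(c_{ij}^{\alpha\beta}\) makes every matrix in the defining sum block-diagonal, and then evaluate each block separately to obtain \((-1)^{n-m}\bigl(\prod_j r_j\bigr)\epsilon^{\,n-m}\).

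One small point on which you are in fact more careful than the paper: the paper simply takes the in-fibre distance to be \(1\), which need not satisfy the triangle inequality when \(X\) contains distances below \(\tfrac12\) (the case \(d(y_i^\alpha,y_i^\beta)\leq d(y_i^\alpha,y_j^\gamma)+d(y_j^\gamma,y_i^\beta)=2d_{ij}\) fails). Your use of a parameter \(\epsilon\) below twice the minimum distance of \(X\) repairs this, so that \(Y\) is genuinely a metric space for every \(X\), as the proposition asserts. For the application to Theorem~\ref{thm:main} this distinction is immaterial---one only needs \(F_{X,\mathbf r}\) not to vanish identically as a polynomial, and the zero locus then has dense open complement in \(\mathrm{Met}_n\), as you also remark at the end.
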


\begin{proof}
Let \(Y\) be the metric space with underlying set \(\bigsqcup_{i=1}^m \{y_i^1, \ldots, y_i^{r_i}\}\) and
\[
d_Y(y_i^\alpha, y_j^\beta)=
\begin{cases}
d_X(x_i, x_j) & i\neq j\\
1 & i=j \text{ and }\alpha\neq\beta\\
0 & i=j \text{ and }\alpha=\beta.
\end{cases}
\]
Let \(f\colon Y \surj X\) be given by \(f(y_i^\alpha) = x_i\).

Using the fact that $c_{ij}^{\alpha\beta}=0$ for $i\neq j$, and $c_{ii}^{\alpha\beta}=1-\delta_{\alpha\beta}$, 
we have 
\[
\begin{split}
F_{X,\mathbf{r}}(d_{ij}^{\alpha\beta})
&=
\sum_{\substack{\beta_1=1, \dots, r_1\\ \cdots\\ \beta_m=1, \dots, r_m}}
\prod_{k=1}^m\det
\begin{pmatrix}
c_{kk}^{11} & c_{kk}^{12} & \dots & \stackrel{\beta_k}{\widehat{1}}&\dots&c_{kk}^{1 r_k}\\
c_{kk}^{21} & c_{kk}^{22} & \dots & 1 &\dots&c_{kk}^{2 r_k}\\
\vdots & \vdots & \ddots & \vdots&\ddots&\vdots\\
c_{kk}^{r_k 1} & c_{kk}^{r_k 2} & \dots & 1 &\dots&c_{kk}^{r_k r_k}
\end{pmatrix}\\
&=
\sum_{\substack{\beta_1=1, \dots, r_1\\ \cdots\\ \beta_m=1, \dots, r_m}}
\prod_{k=1}^n\det
\begin{pmatrix}
0 & 1 & \dots & \stackrel{\beta_k}{\widehat{1}}&\dots & 1 \\
1 & 0 & \dots & 1 &\dots& 1 \\
\vdots & \vdots & \ddots & \vdots&\ddots&\vdots\\
1 & 1 & \dots & 1 &\dots&0
\end{pmatrix}\\
&=(-1)^{n-m} r_1r_2\cdots r_m.
\end{split}
\]
Since \(f\) is a surjection, we have \(r_i \neq 0\) for all \(i\), so this product is non-zero.
\end{proof}

\bibliographystyle{amsplain}
\bibliography{mag_generic_continuity_v2.bib}

\end{document}